\documentclass[12pt, a4paper]{article}
\usepackage{amsmath, amssymb, amsthm}
\usepackage{graphics}
\usepackage{graphicx}
\usepackage{epsfig}
\usepackage{dblfloatfix}
\usepackage{float}
\usepackage{placeins}
\usepackage{flafter}
\usepackage[usenames]{color}
\usepackage{cite}

\usepackage{epstopdf}
\usepackage{url}
\usepackage{hyperref}
\date{}
\topmargin -.4in \textwidth  6.5in \textheight 9.4in
\addtolength{\oddsidemargin}{-.55in}

\newtheorem{df}{Definition}[section]
\newtheorem{pro}{Proposition}[section]
\newtheorem{thm}{Theorem}[section]
\newtheorem{cor}{Corollary}[section]
\newtheorem{ex}{Example}[section]
\newtheorem{lem}{Lemma}[section]
\newtheorem{rem}{Remark}[section]

\makeatletter

\renewcommand\section{\@startsection {section}{1}{\z@}
{-30pt \@plus -1ex \@minus -.2ex} {2.3ex \@plus.2ex}
{\normalfont\normalsize\bfseries}}

\renewcommand\subsection{\@startsection{subsection}{2}{\z@}
{-3.25ex\@plus -1ex \@minus -.2ex} {1.5ex \@plus .2ex}
{\normalfont\normalsize\bfseries}}

\renewcommand{\@seccntformat}[1]{\csname the#1\endcsname. }

\makeatother

\begin{document}
\title{Extension of Almost Armendariz Rings}
\author{\small{Sushma Singh and Om Prakash} \\ \small{Department of Mathematics} \\ \small{Indian Institute of Technology Patna, Bihta} \\ \small{Patna, India- 801 106} \\  \small{sushmasingh@iitp.ac.in \& om@iitp.ac.in }}

\date{}

\maketitle
\begin{abstract}
 A ring $R$ is said to be an almost Armendariz ring if whenever product of two polynomials in $R[x]$ is zero, then product of their coefficients are in $N_{*}(R)$. In this article, for an endomorphism $\alpha$ on $R$, we define an $\alpha$-almost Armendariz ring of $R$ considering the polynomials in skew polynomial ring $R[x; \alpha]$ instead of $R[x]$. It is the generalisation of an almost Armendariz ring\cite{S} and an $\alpha$-Armendariz ring \cite{CC}. Moreover, for an endomorphism $\alpha$ of $R$, we define an $\alpha$-skew  almost Armendariz ring, and prove that a reversible ring $R$ with certain condition on endomorphism $\alpha$, its polynomial ring $R[x]$ is an $\overline{\alpha}$-skew almost Armendariz ring.
\end{abstract}

\textbf{Mathematics Subject Classification:} 16W20; 16N40; 16S36; 16U99.\\
\textbf{Keywords:} Ring endomorphism, Armendariz ring, Reversible ring, Prime radicals,  $\alpha$-Armendariz ring, $\alpha$-skew Armendariz ring, $\alpha$-almost Armendariz ring, $\alpha$-skew almost Armendariz ring.

\section{Introduction}
In this article, $R$ denotes an associative ring with unity. Given a ring $R$, $R[x]$ is the ring of polynomials over $R$ in indeterminate $x$. For any polynomial $f(x)\in R[x]$, $C_{f(x)}$ denotes the set of all coefficients of $f(x)$. $M_{n}(R)$ and $U_{n}(R)$ denote the $n\times n$ full matrix ring and upper triangular matrix ring over $R$, respectively. $D_{n}(R)$ is the ring of $n\times n$ upper triangular matrices over $R$ whose diagonal entries are equal. We use $e_{ij}$ for the matrix with $(i, j)th$ entry $1$ and $0$ elsewhere. The symbol $\mathbb{Z}$ and $\mathbb{Q}$ denote ring of integers and ring of rational numbers respectively.\\
Here, $N(R)$ denotes the set of all nilpotent elements of the ring $R$. An element $a \in R$ is strongly nilpotent if every sequence $a_{1}, a_{2}, a_{3}\ldots$ such that $a_{1} = a$ and $a_{n+1}\in a_{n}Ra_{n}$ (for all $n$) is eventually zero, i.e. there exists a positive integer $n$ such that $a_{n} = 0$. Recall that the prime radical (lower nil radical) of a ring $R$ is the intersection of all prime ideals of $R$ and it is denoted by $N_{*}(R)$. So, $N_{*}(R)$ is precisely the collection of all strongly nilpotent elements of $R$, i.e., $N_{*}(R) = \{x\in R : RxR~ is~ nilpotent\}$. It is known that $N_{*}(R)\subseteq N(R)$.\\

A ring $R$ is said to be reduced if it has no nonzero nilpotent elements. In 1974, Armendariz [\cite{E}, Lemma 1] had pointed out reduced ring satisfies the certain condition which is later called Armendariz ring by Rege and Chhawchharia \cite{M} in 1997. A ring $R$ is said to be an Armendariz if whenever two polynomials $f(x), g(x)\in R[x]$ such that $f(x)g(x) = 0$, then $ab = 0$ for each $a \in C_{f(x)}$ and $b \in C_{g(x)}$.\\
Recall that a ring $R$ is said to be reversible if $ab = 0$ implies $ba = 0$ for each $a, b \in R$. A ring $R$ is said to be semicommutative if $ab = 0$ implies $aRb = 0$ for each $a, b \in R$ \cite{G}. Therefore, a reversible ring is semicommutative but converse is not true.\\
In \cite{S}, some results on almost Armendariz rings are given. We define an almost Armendariz ring as follows:\\
 A ring $R$ is said to be an almost Armendariz ring if whenever two polynomials $f(x)$ and $g(x)\in R[x]$ such that $f(x)g(x) = 0$, then $ab \in N_{*}(R)$ for each $a \in C_{f(x)}$ and $b \in C_{g(x)}$. \\
 Clearly, every semicommutative ring is an almost Armendariz ring by \cite{S}. Therefore, an almost Armendariz ring is a generalisation of an Armendariz ring and semicommutative ring.\\

Recently, in few manuscripts, the Armendariz property of a ring were extended and studied over the skew polynomial rings \cite{C, CC}. For an endomorphism $\alpha$ of a ring $R$, the skew polynomial ring $R[x; \alpha]$ consists of the polynomials in $x$ with multiplication subject to the relation $xr = \alpha(r)x$ for each $r \in R$. Due to \cite{CC}, a ring $R$ is an $\alpha$-Armendariz ring if whenever $f(x), g(x) \in R[x; \alpha]$ such that $f(x)g(x) = 0$, then $a_{i}b_{j} = 0$ for $0 \leq i\leq m$ and $0 \leq j \leq n$.\\
Hong et al.\cite{C} generalized the concept of Armendariz ring with respect to an endomorphism $\alpha$ of $R$ and named as  $\alpha$-skew Armendariz ring.\\
A ring $R$ is said to be an $\alpha$-skew Armendariz ring if $f(x) = a_{0}+a_{1}x+a_{2}x^{2}+\cdots+a_{n}x^{m}, g(x) = b_{0}+b_{1}x+b_{2}x^{2}+\cdots+b_{n}x^{n} \in R[x; \alpha]$ such that $f(x)g(x)= 0$, then $a_{i}\alpha^{i}(b_{j}) = 0$ for each $i, j$. \\
They proved that $\alpha$-rigid rings are $\alpha$-skew Armendariz ring. Moreover, if $\alpha^{t} = I$ for some positive integer $t$, then $R$ is $\alpha$-skew Armendariz ring if and only if $R[x]$ is $\overline{\alpha}$-skew Armendariz ring.\\

According to Hashemi and Moussavi \cite{EE}, a ring $R$ is said to be an $\alpha$-compatible if for each $a, b \in R$, $ab = 0 \Leftrightarrow a\alpha(b) = 0$. According to Krempa \cite{J}, an endomorphism $\alpha$ of a ring $R$ is said to be rigid if $a \alpha (a) = 0$ implies $a = 0$ for each $a \in R$. A ring $R$ is $\alpha$-rigid if there exists a rigid endomorphism $\alpha$ of $R$. In 2005, Hashemi and Moussavi \cite{EE}, considered a ring $R$ as $\alpha$-rigid if and only if $R$ is $\alpha$-compatible and reduced. Moreover, by Proposition (3) of \cite{C}, if $R$ is $\alpha$-rigid, then $R[x; \alpha]$ is reduced.\\ Due to \cite{T}, a ring $R$ is $\alpha(*)$ rigid if $a\alpha(a) \in N_{*}(R)$ implies $a \in N_{*}(R)$, where $\alpha$ is an endomorphism of the ring $R$.\\
Now, we are introducing the notions of $\alpha$-almost Armendariz ring and $\alpha$-skew almost Armendariz ring, where $\alpha$ is an endomorphism on given ring $R$. Some results based on these two notions have been discussed here (in Section 2 and Section 3). \\

\section{$\alpha$-almost Armendariz ring}
\begin{df}
Let $\alpha$ be an endomorphism of a ring $R$. Then the ring $R$ is said to be an $\alpha$-almost Armendariz ring if for $f(x) = a_{0}+a_{1}x+a_{2}x^{2}+\ldots+a_{m}x^{m}, g(x) = b_{0}+b_{1}x+b_{2}x^{2}+\ldots+b_{n}x^{n} \in R[x; \alpha]$ such that $f(x)g(x) = 0$, then $a_{i}b_{j}\in N_{*}(R)$ for each $i, j$, where $0 \leq i \leq m$ and $0 \leq j \leq n$.
\end{df}
It is clear, for $\alpha = id_{R}$, almost Armendariz and $\alpha$-almost Armendariz ring are same, where $id_{R}$ is the identity endomorphism of $R$.\\
\begin{rem} For an endomorphism $\alpha$ of a ring $R$, we have the following:
\begin{itemize}
\item[$(1)$]  If $R$ is an $\alpha$-Armendariz ring, then $R$ is an $\alpha$-almost Armendariz ring.
\item[$(2)$]  Every subring $S$ of an $\alpha$-almost Armendariz ring $R$ with $\alpha(S) \subseteq S$ is also an $\alpha$-almost Armendariz subring.
\end{itemize}
\end{rem}
The following example shows that there exists an endomorphism $\alpha$ of an almost Armendariz ring $R$ such that $R$ is not an $\alpha$-almost Armendariz ring.

\begin{ex}
Let $R = R_{1} \oplus R_{2}$, where $R_{1}$, $R_{2}$ be any two reduced rings. Then $R$ is a semicommutative ring and hence $R$ is an almost Armendariz ring. Let $\alpha : R \rightarrow R$ be an endomorphism defined by $\alpha ((a, b)) = (b, a)$. Let $f(x) = (1, 0) + (0, 1)x, g(x) = (0, 1) - (0, 1)x \in R[x; \alpha]$. Then $f(x)g(x) = 0$, but $(0, 1)(0, 1)\notin N_{*}(R)$. Therefore, $R$ is not an $\alpha$-almost Armendariz ring.
\end{ex}

Also, every $\alpha$-rigid ring is an $\alpha$-almost Armendariz ring but converse is not true. In this regard, we have the following example:
\begin{ex} Let $R = \left(
                    \begin{array}{ccc}
                      \mathbb{Z} & \mathbb{Z} & \mathbb{Q} \\
                      0 & \mathbb{Z} & \mathbb{Q} \\
                      0 & 0 & \mathbb{Z} \\
                    \end{array}
                  \right)$ be a ring and $\alpha:R \rightarrow R$ defined by\\

~~~~~~~~~~~~~~~~~ $\alpha  \left(\left(
                    \begin{array}{ccc}
                      \mathbb{Z} & \mathbb{Z} & \mathbb{Q} \\
                      0 & \mathbb{Z} & \mathbb{Q} \\
                      0 & 0 & \mathbb{Z} \\
                    \end{array}
                  \right)\right)~ = ~\left(
                    \begin{array}{ccc}
                      \mathbb{Z} & \mathbb{Z} & \mathbb{-Q} \\
                      0 & \mathbb{Z} & \mathbb{-Q} \\
                      0 & 0 & \mathbb{Z} \\
                    \end{array}
                  \right)$.\\

 Then $\alpha$ is an endomorphism on $R$ and hence an $\alpha$-almost Armendariz ring. For this, let $f(x) = \sum _{i=0}^{m}A_{i}x^{i}$ and $g(x) = \sum _{j=0}^{n}B_{j}x^{j} \in R[x]$ such that $f(x)g(x) = 0$, where $A_{i}'s$ and $B_{j}'s$ are\\

$A_{i} = \left(
           \begin{array}{ccc}
             a^{i} & a_{12}^{i} & a_{13}^{i} \\
             0 & a^{i} & a_{23}^{i} \\
             0 & 0 & a^{i} \\
           \end{array}
         \right)$
,~~~~~~~~
     $B_{j} = \left(
           \begin{array}{ccc}
             b^{j} & b_{12}^{j} & b_{13}^{j} \\
             0 & b^{j} & b_{23}^{j} \\
             0 & 0 & b^{j} \\
           \end{array}
         \right)$.\\
Now, from $f(x)g(x) = 0$, we have $\Big(\sum _{i=0}^{m}a^{i}x^{i}\Big)\Big(\sum_{j=0}^{n}b^{j}x^{j}\Big) = 0\in \mathbb{Z}[x]$. Since $\mathbb{Z}$ is an Armendariz ring, $a^{i}b^{j} = 0$ for each $0 \leq i \leq m$, $0 \leq j \leq n$. Therefore, $A_{i}B_{j} \in N_{*}(R)$ for each $i, j$. Thus, $R$ is an $\alpha$-almost Armendariz ring. Here, $e_{13}\alpha (e_{13}) = 0$ but $e_{13} \neq 0$. So, $R$ is not an $\alpha$-rigid ring.

\end{ex}

It is well known that an endomorphism $\alpha$ of a ring $R$ can be extended to an endomorphism $\overline{\alpha}$ on $U_{n}(R)$ by defining as $\overline{\alpha}((a_{ij})_{n \times n}) = (\alpha(a_{ij}))_{n \times n}$. Moreover, we have\\
\begin{equation*}
N_{*}(U_{n}(R)) = \left(
                    \begin{array}{ccc}
                      N_{*}(R) & R & R \\
                      0 & \ddots & R \\
                      0 & 0 & N_{*}(R) \\
                    \end{array}
                  \right).
\end{equation*}
\begin{pro}
Let $\alpha$ be an endomorphism of a ring $R$. Then $R$ is an $\alpha$-almost Armendariz ring if and only if for a positive integer $n$, $U_{n}(R)$ is an
$\overline{\alpha}$-almost Armendariz ring.
\end{pro}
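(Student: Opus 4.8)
The plan is to prove both implications directly, relying on the entrywise description of $N_{*}(U_{n}(R))$ recalled just above the proposition (a matrix in $U_{n}(R)$ lies in $N_{*}(U_{n}(R))$ precisely when each of its diagonal entries lies in $N_{*}(R)$), together with the elementary fact that the diagonal of a product of upper triangular matrices is the entrywise product of the diagonals.

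For the easy direction, suppose $U_{n}(R)$ is an $\overline{\alpha}$-almost Armendariz ring. Consider the subring $S = \{aI_{n} : a \in R\}$ of scalar matrices. Then $S \cong R$ via $a \mapsto aI_{n}$, and since $\overline{\alpha}(aI_{n}) = \alpha(a)I_{n}$, this isomorphism carries $\alpha$ to the restriction $\overline{\alpha}|_{S}$ and satisfies $\overline{\alpha}(S) \subseteq S$. By Remark 2.1$(2)$, $S$ is $\overline{\alpha}|_{S}$-almost Armendariz, and hence $R$ is $\alpha$-almost Armendariz.

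For the converse, assume $R$ is $\alpha$-almost Armendariz and let $F(x) = \sum_{i=0}^{m} A_{i}x^{i}$ and $G(x) = \sum_{j=0}^{n} B_{j}x^{j}$ in $U_{n}(R)[x; \overline{\alpha}]$ satisfy $F(x)G(x) = 0$; write $A_{i} = (a_{pq}^{i})$ and $B_{j} = (b_{pq}^{j})$. Using $x^{i}B_{j} = \overline{\alpha}^{i}(B_{j})x^{i}$ and collecting the coefficient of $x^{k}$ gives $\sum_{i+j=k} A_{i}\overline{\alpha}^{i}(B_{j}) = 0$ for every $k$. Reading off the $(p,p)$ diagonal entry: because $A_{i}$ and $\overline{\alpha}^{i}(B_{j}) = (\alpha^{i}(b_{pq}^{j}))$ are both upper triangular, the $(p,p)$ entry of $A_{i}\overline{\alpha}^{i}(B_{j})$ is $a_{pp}^{i}\alpha^{i}(b_{pp}^{j})$, so $\sum_{i+j=k} a_{pp}^{i}\alpha^{i}(b_{pp}^{j}) = 0$ for each $k$. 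Setting $f_{p}(x) = \sum_{i} a_{pp}^{i}x^{i}$ and $g_{p}(x) = \sum_{j} b_{pp}^{j}x^{j}$ in $R[x; \alpha]$, this is exactly $f_{p}(x)g_{p}(x) = 0$. The $\alpha$-almost Armendariz hypothesis then yields $a_{pp}^{i}b_{pp}^{j} \in N_{*}(R)$ for all $i, j$ and every diagonal index $p$. Since $(A_{i}B_{j})_{pp} = a_{pp}^{i}b_{pp}^{j}$ (again the diagonal of a product of upper triangular matrices), every diagonal entry of $A_{i}B_{j}$ lies in $N_{*}(R)$, and the displayed description of $N_{*}(U_{n}(R))$ gives $A_{i}B_{j} \in N_{*}(U_{n}(R))$ for all $i, j$. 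Hence $U_{n}(R)$ is $\overline{\alpha}$-almost Armendariz.

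The only real subtlety, and the step I would be most careful about, is the passage from $F(x)G(x) = 0$ to the scalar identity $f_{p}(x)g_{p}(x) = 0$: one must check that $\overline{\alpha}$ acts entrywise so that $\overline{\alpha}^{i}(B_{j})$ retains $(p,p)$ entry $\alpha^{i}(b_{pp}^{j})$, and that the upper triangular shape forces the $(p,p)$ entry of each matrix product to depend only on the $(p,p)$ entries of the factors. Everything off the diagonal is irrelevant, which is precisely why the weaker $N_{*}$-membership conclusion (rather than outright vanishing) is the natural target and makes the argument go through cleanly.
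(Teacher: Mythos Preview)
Your proof is correct and follows essentially the same approach as the paper's: the subring observation (via Remark~2.1(2)) handles one direction, and for the other you extract the diagonal polynomials $f_{p}(x)g_{p}(x)=0$ in $R[x;\alpha]$, apply the $\alpha$-almost Armendariz hypothesis to get $a_{pp}^{i}b_{pp}^{j}\in N_{*}(R)$, and conclude via the displayed description of $N_{*}(U_{n}(R))$. The paper's version is terser (it simply asserts $f_{r}(x)g_{r}(x)=0$ and invokes the isomorphism $U_{n}(R)[x;\overline{\alpha}]\cong U_{n}(R[x;\alpha])$), whereas you spell out explicitly why $\overline{\alpha}$ acting entrywise and upper-triangularity force the diagonal identity---but the underlying argument is the same.
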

\begin{proof} Since subring of an $\alpha$-almost Armendariz ring is an $\alpha$-almost Armendariz.
 Note that\\ $U_{n}(R)[x, \overline{\alpha}] \cong U_{n}(R[x; \alpha])$. So, we prove only necessary part. Let $f(x) = A_{0}+A_{1}x+A_{2}x^{2}+\ldots +A_{p}x^{p}$ and $g(x) = B_{0}+B_{1}x+B_{2}x^{2}+\ldots +B_{q}x^{q} \in U_{n}(R)[x; \overline{\alpha}]$ such that  $f(x)g(x) = 0$, where

$A_{i} = \left(
           \begin{array}{cccc}
             a_{11}^{i} & a_{12}^{i} & \ldots & a_{1n}^{i} \\
             0 & a_{22}^{i} & \ldots & a_{nn}^{i} \\
             \vdots & \vdots & \ddots & \vdots \\
             0 & 0 & \ldots & a_{nn}^{i} \\
           \end{array}
         \right)$,
         $B_{j} = \left(
           \begin{array}{cccc}
             b_{11}^{j} & b_{12}^{j} & \ldots & b_{1n}^{j} \\
             0 & b_{22}^{j} & \ldots & b_{nn}^{i} \\
             \vdots & \vdots & \ddots & \vdots \\
             0 & 0 & \ldots & b_{nn}^{j} \\
           \end{array}
         \right)$, for each $0 \leq i \leq p$ and $0 \leq j \leq q$.

Then $f_{r}(x) = \sum _{i=0}^{p}a_{rr}^{i}x^{i}$ and $g_{r}(x) = \sum _{j=0}^{q}a_{rr}^{j}x^{j} \in R[x; \alpha]$ and $f_{r}(x)g_{r}(x) = 0$, for each $1 \leq r \leq n$. Since $R$ is an $\alpha$-almost Armendariz ring, so $a_{rr}^{i}b_{rr}^{j} \in N_{*}(R)$ for each $1 \leq r \leq n$ and each $i, j$. Therefore, $A_{i}B_{j} \in N_{*}(R)$ for each $0 \leq i \leq p$ and $0 \leq j \leq q$. Hence $U_{n}(R)$ is an $\overline{\alpha}$-almost Armendariz ring.
\end{proof}
\begin{cor} A ring $R$ is an almost Armendariz ring if and only if for a positive integer $n$, $U_{n}(R)$ is an almost Armendariz ring.
\end{cor}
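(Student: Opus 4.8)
The plan is to derive this as a direct specialization of the preceding Proposition to the identity endomorphism. First I would recall the observation recorded immediately after the definition of an $\alpha$-almost Armendariz ring: when $\alpha = id_R$, the notions of almost Armendariz and $\alpha$-almost Armendariz coincide. Indeed, the skew polynomial ring $R[x; id_R]$ is exactly the ordinary polynomial ring $R[x]$, since the relation $xr = \alpha(r)x$ collapses to $xr = rx$; and the condition $a_i b_j \in N_*(R)$ for all $i,j$ is then precisely the defining condition of an almost Armendariz ring.

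Next I would verify that the extension $\overline{id_R}$ of the identity endomorphism to $U_n(R)$ is again an identity map. By the definition $\overline{\alpha}((a_{ij})_{n\times n}) = (\alpha(a_{ij}))_{n\times n}$, we get $\overline{id_R}((a_{ij})_{n\times n}) = (a_{ij})_{n\times n}$, so $\overline{id_R} = id_{U_n(R)}$. Consequently, an $\overline{id_R}$-almost Armendariz structure on $U_n(R)$ is nothing other than an almost Armendariz structure on $U_n(R)$, again by the coincidence of the two notions at the identity endomorphism.

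With these two identifications in place, the corollary follows by applying the Proposition with $\alpha = id_R$. The Proposition asserts that $R$ is $id_R$-almost Armendariz if and only if $U_n(R)$ is $\overline{id_R}$-almost Armendariz; translating both sides through the identifications above yields exactly that $R$ is almost Armendariz if and only if $U_n(R)$ is almost Armendariz, which is the claim.

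I do not anticipate a genuine obstacle here, as the result is a formal specialization rather than a new argument. The only points requiring a line of care are confirming that the assignment $\alpha \mapsto \overline{\alpha}$ sends the identity to the identity, and that the ring isomorphism $U_n(R)[x; \overline{\alpha}] \cong U_n(R[x; \alpha])$ invoked in the Proposition reduces, at $\alpha = id_R$, to the familiar isomorphism $U_n(R)[x] \cong U_n(R[x])$. Both are immediate from the definitions, so the corollary is essentially a restatement of the Proposition in the untwisted case.
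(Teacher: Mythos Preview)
Your proposal is correct and matches the paper's approach: the corollary is stated in the paper without proof, being an immediate specialization of the preceding Proposition to $\alpha = id_R$, exactly as you outline.
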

\begin{pro} A ring $R$ is an $\alpha$-almost Armendariz if and only if $R[x]/ \langle x^{n}\rangle$ is an $\overline{\alpha}$-almost Armendariz.
\end{pro}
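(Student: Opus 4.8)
The plan is to identify $S := R[x]/\langle x^{n}\rangle$ with a suitable subring of $U_{n}(R)$ and then to exploit the two chains of inclusions $R \hookrightarrow S \hookrightarrow U_{n}(R)$ together with Proposition 2.1 and Remark 2.1$(2)$. First I would set up the ring isomorphism $\varphi$ carrying $a_{0}+a_{1}x+\cdots+a_{n-1}x^{n-1} \in S$ to the upper triangular Toeplitz matrix whose $(i,j)$ entry is $a_{j-i}$ for $j \geq i$ and $0$ below the diagonal; this maps $S$ isomorphically onto the subring $T_{n}(R) \subseteq U_{n}(R)$ of upper triangular matrices that are constant along each diagonal. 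The point of this identification is twofold: $T_{n}(R)$ is a subring of $U_{n}(R)$ stable under $\overline{\alpha}$ (because $\overline{\alpha}$ acts entrywise by $\alpha$ and hence preserves the Toeplitz pattern), and the extension of $\alpha$ to $S$ corresponds, under $\varphi$, to the restriction of $\overline{\alpha}$ on $U_{n}(R)$. Consequently the skew polynomial ring $S[y;\overline{\alpha}]$ is identified with $T_{n}(R)[y;\overline{\alpha}] \subseteq U_{n}(R)[y;\overline{\alpha}]$, where $y$ denotes the test indeterminate (kept distinct from the $x$ used to form $S$).

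For the forward implication, suppose $R$ is $\alpha$-almost Armendariz. Proposition 2.1 then gives that $U_{n}(R)$ is $\overline{\alpha}$-almost Armendariz. Since $T_{n}(R)$ is a subring of $U_{n}(R)$ with $\overline{\alpha}(T_{n}(R)) \subseteq T_{n}(R)$, Remark 2.1$(2)$ yields that $T_{n}(R)$, and hence its isomorphic copy $S = R[x]/\langle x^{n}\rangle$, is $\overline{\alpha}$-almost Armendariz.

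For the converse, suppose $S$ is $\overline{\alpha}$-almost Armendariz. Here I would view $R$ as the subring of constant polynomials inside $S$; this copy of $R$ is stable under $\overline{\alpha}$ (a constant is sent to a constant), and $\overline{\alpha}$ restricts to $\alpha$ on it. A direct application of Remark 2.1$(2)$ to the inclusion $R \hookrightarrow S$ then shows $R$ is $\alpha$-almost Armendariz, completing the equivalence.

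The step that deserves genuine care --- and which I expect to be the main obstacle --- is the compatibility of the prime radicals under the subring inclusions, since this is precisely what legitimises Remark 2.1$(2)$. Concretely, for a subring $B \subseteq A$ one must check $N_{*}(A) \cap B \subseteq N_{*}(B)$: if $b \in B$ is strongly nilpotent in $A$, then because every sequence $b_{1}=b,\ b_{k+1}\in b_{k}Bb_{k}$ is also a valid sequence in $A$ (as $b_{k}Bb_{k}\subseteq b_{k}Ab_{k}$), it must terminate, so $b$ is strongly nilpotent in $B$. Applying this with $(A,B)=(U_{n}(R),T_{n}(R))$ and with $(A,B)=(S,R)$ guarantees that the coefficient products land in the correct prime radical in each direction. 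The remaining tasks --- verifying that $\varphi$ is a ring isomorphism intertwining the two endomorphisms, and that it induces $S[y;\overline{\alpha}]\cong T_{n}(R)[y;\overline{\alpha}]$ --- are routine once the Toeplitz identification is in place.
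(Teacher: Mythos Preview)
Your proposal is correct and follows essentially the same route as the paper: identify $R[x]/\langle x^{n}\rangle$ with the Toeplitz subring of $U_{n}(R)$, invoke Proposition~2.1 for the forward direction, and use Remark~2.1(2) (subrings inherit the property) for both inclusions $R\hookrightarrow S\hookrightarrow U_{n}(R)$. Your explicit verification of $N_{*}(A)\cap B\subseteq N_{*}(B)$ via strongly nilpotent sequences is a detail the paper takes for granted inside Remark~2.1(2), so you are simply being more careful rather than doing anything different.
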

\begin{proof} For $n \geq 2$, $$R[x]/\langle x ^{n}\rangle \cong  \left \{\left(
                                                           \begin{array}{ccccc}
                                                             a_{0} & a_{1} & a_{2} & \ldots & a_{n-1} \\
                                                             0 & a_{0} & a_{1} & \ldots & a_{n-2} \\
                                                             0 & 0 & a_{0} & \ldots & a_{n-3} \\
                                                             \vdots & \vdots & \vdots & \ddots & \vdots\\
                                                             0 & 0 & 0 & \ldots & a_{0} \\
                                                           \end{array}
                                                         \right)
                                              : a_{i} \in R, ~i = 0, ~1,~ 2,\ldots n-1 \right\}.$$
Hence, $R[x]/\langle x ^{n}\rangle$ is an $\alpha$-almost Armendariz by Proposition (2.1). Converse is also true, being subring of an  $\alpha$-almost Armendariz ring is an $\alpha$-almost Armendariz.
\end{proof}
Given a ring $R$ and a bimodule $_{R}M_{R}$, the trivial extension of $R$ by $M$ is the ring $T(R, M)$ with the usual addition and multiplication defined as
$$(r_{1}, m_{1})(r_{2}, m_{2}) = (r_{1}r_{2}, r_{1}m_{2}+m_{1}r_{2}).$$
This is isomorphic to the ring of all matrices of the form $\left(
                                                 \begin{array}{cc}
                                                   r & m \\
                                                   0 & r \\
                                                 \end{array}
                                               \right)$ with usual addition and multiplication of  matrices, where $r\in R$ and $m\in M$.
\begin{cor} Let $\alpha$ be an endomorphism of a ring $R$. Then $R$ is an $\alpha$-almost Armendariz ring if and only if $U(R, R)$ is an $\overline{\alpha}$-almost Armendariz ring.
\end{cor}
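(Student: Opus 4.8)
The plan is to identify $U(R,R)$ with the trivial extension $T(R,R)$, which by the matrix description recorded just above is isomorphic to the subring $D_{2}(R)$ of $U_{2}(R)$ consisting of the matrices $\left(\begin{smallmatrix} a & b \\ 0 & a\end{smallmatrix}\right)$, and then to read off both implications from Proposition (2.1) and Remark (2.1)(2). The first thing to note is that $D_{2}(R)$ is $\overline{\alpha}$-invariant: since $\overline{\alpha}\left(\begin{smallmatrix} a & b \\ 0 & a\end{smallmatrix}\right) = \left(\begin{smallmatrix} \alpha(a) & \alpha(b) \\ 0 & \alpha(a)\end{smallmatrix}\right)$ again has equal diagonal entries, $\overline{\alpha}(D_{2}(R)) \subseteq D_{2}(R)$, so $D_{2}(R)$ is an $\overline{\alpha}$-invariant subring of $U_{2}(R)$.

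For the forward direction I would assume $R$ is $\alpha$-almost Armendariz, apply Proposition (2.1) with $n = 2$ to conclude that $U_{2}(R)$ is $\overline{\alpha}$-almost Armendariz, and then invoke Remark (2.1)(2): because $T(R,R) \cong D_{2}(R)$ is an $\overline{\alpha}$-invariant subring of $U_{2}(R)$, it inherits the $\overline{\alpha}$-almost Armendariz property. For the converse I would use the diagonal embedding $r \mapsto \left(\begin{smallmatrix} r & 0 \\ 0 & r\end{smallmatrix}\right)$, which is a unital ring monomorphism of $R$ into $T(R,R)$ whose image is $\overline{\alpha}$-invariant and intertwines $\overline{\alpha}$ with $\alpha$; Remark (2.1)(2) applied to this copy of $R$ then forces $R$ to be $\alpha$-almost Armendariz.

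The step requiring the most care, and the real content behind Remark (2.1)(2), is the compatibility of the prime radical with passage to a subring: if $S \subseteq R$ and $a \in S$ is strongly nilpotent in $R$, then every sequence $a_{1} = a$, $a_{n+1} \in a_{n} S a_{n} \subseteq a_{n} R a_{n}$ is eventually zero, so $a$ is strongly nilpotent in $S$, giving $N_{*}(R) \cap S \subseteq N_{*}(S)$. This is exactly what guarantees that coefficient products landing in $N_{*}(U_{2}(R))$ in fact land in $N_{*}(D_{2}(R))$, so that the inherited condition is stated with the correct radical.

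If one prefers a self-contained argument avoiding Proposition (2.1), I would instead use the isomorphism $T(R,R)[x;\overline{\alpha}] \cong T(R[x;\alpha], R[x;\alpha])$ (the $D_{2}$-analogue of $U_{n}(R)[x;\overline{\alpha}] \cong U_{n}(R[x;\alpha])$). Writing a zero product $F(x)G(x) = 0$ in matrix form yields $f f' = 0$ on the diagonal with $f, f' \in R[x;\alpha]$, whence $a_{i} a'_{j} \in N_{*}(R)$ since $R$ is $\alpha$-almost Armendariz. A short calculation then shows that any $\left(\begin{smallmatrix} c & d \\ 0 & c\end{smallmatrix}\right)$ with $c \in N_{*}(R)$ is strongly nilpotent in $D_{2}(R)$ — the diagonal sequence terminates by strong nilpotence of $c$, after which a single further bracket $M_{k} D_{2}(R) M_{k}$ annihilates the surviving corner — placing each $A_{i} A'_{j}$ in $N_{*}(T(R,R))$ and completing the necessary direction.
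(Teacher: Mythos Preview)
Your argument is correct and is in substance the same as the paper's. The paper's proof simply cites the isomorphism $U(R,R)\cong R[x]/\langle x^{2}\rangle$ and invokes Proposition~(2.2); since Proposition~(2.2) is itself proved by realizing $R[x]/\langle x^{n}\rangle$ as an $\overline{\alpha}$-invariant subring of $U_{n}(R)$ and applying Proposition~(2.1) together with Remark~(2.1)(2), you have merely unpacked that intermediate layer and argued directly with $D_{2}(R)\subseteq U_{2}(R)$.
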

\begin{proof}
Since $R[x]/<x^{2}>~\cong~U(R, R)$. Therefore by Proposition (2.2) $U(R, R)$ is an $\overline{\alpha}$-almost Armendariz ring.
\end{proof}
\begin{lem}[\cite{EE}, Lemma 3.2] Let $R$ be an $\alpha$-compatible ring. Then following hold:
\begin{itemize}
\item[$(1)$] If $ab = 0$, then $a\alpha^{m}(b) = \alpha^{m}(a)b = 0$, for all positive integer $m$.
\item[$(2)$]  If $\alpha^{n}(a)b = 0$ for some positive integer $n$, then $ab = 0$.
\end{itemize}
\end{lem}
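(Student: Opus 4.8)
The plan is to use only the defining equivalence $ab = 0 \iff a\alpha(b) = 0$ of $\alpha$-compatibility, together with the fact that $\alpha$ is a ring homomorphism. A preliminary observation I would record first is that $\alpha$ is injective: if $\alpha(c) = 0$, then taking $a = 1$ and $b = c$ gives $1\cdot\alpha(c) = 0$, so the reverse direction of compatibility yields $1\cdot c = c = 0$. This uses that $R$ has a unity, and it will be needed for part $(2)$.

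For the first equality in $(1)$, namely $a\alpha^{m}(b) = 0$, I would argue by induction on $m$. The base case $m = 1$ is exactly the forward direction of compatibility. For the inductive step, if $a\alpha^{k}(b) = 0$, then applying the forward direction to the pair $\big(a,\ \alpha^{k}(b)\big)$ gives $a\alpha\big(\alpha^{k}(b)\big) = a\alpha^{k+1}(b) = 0$.

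For the second equality $\alpha^{m}(a)b = 0$, the point is that compatibility as stated only manipulates the right-hand factor, so I first push the endomorphism onto both factors and then peel it back off the right one. Concretely, applying $\alpha^{m}$ to $ab = 0$ and using that $\alpha$ is multiplicative gives $\alpha^{m}(a)\alpha^{m}(b) = 0$. Now I would repeatedly invoke the reverse direction of compatibility: writing $\alpha^{m}(b) = \alpha\big(\alpha^{m-1}(b)\big)$, the relation $\alpha^{m}(a)\,\alpha\big(\alpha^{m-1}(b)\big) = 0$ forces $\alpha^{m}(a)\alpha^{m-1}(b) = 0$, and iterating this $m$ times strips all the $\alpha$'s off the second factor to leave $\alpha^{m}(a)b = 0$.

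Finally, for $(2)$, suppose $\alpha^{n}(a)b = 0$. Applying the forward direction of compatibility $n$ times to the right factor gives $\alpha^{n}(a)\alpha^{n}(b) = 0$, that is $\alpha^{n}(ab) = 0$ since $\alpha^{n}$ is a homomorphism; the injectivity of $\alpha$, hence of $\alpha^{n}$, then yields $ab = 0$. The one step that needs care, and the only genuinely non-formal point, is the peeling argument for the second half of $(1)$ together with the use of injectivity in $(2)$: because the compatibility hypothesis is asymmetric, it never lets one attach $\alpha$ to the left factor directly, so every such manipulation must be routed through applying $\alpha$ to the whole product and then removing it from the right.
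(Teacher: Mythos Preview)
Your argument is correct. The injectivity observation is sound (and does rely on the unity, which the paper assumes throughout), the induction for $a\alpha^{m}(b)=0$ is immediate, the ``apply $\alpha^{m}$ then peel off'' manoeuvre for $\alpha^{m}(a)b=0$ is exactly the right workaround for the asymmetry of the compatibility hypothesis, and routing part $(2)$ through $\alpha^{n}(ab)=0$ plus injectivity is clean.

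As for comparison: the paper does not actually prove this lemma. It is quoted verbatim as Lemma~3.2 of Hashemi and Moussavi \cite{EE} and used as a black box, so there is no in-paper argument to set yours against. Your write-up would serve perfectly well as a self-contained proof should one be desired.
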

\begin{lem} Let $R$ be an $\alpha$-compatible ring. Then we have the following:
\begin{itemize}
\item[$(1)$] If $ab \in N_{*}(R)$, then $a\alpha^{k}(b)\in N_{*}(R)$ and $\alpha^{k}(a)b \in N_{*}(R)$, for all positive integer $k$.
\item[$(2)$]If $\alpha^{m}(a)b \in N_{*}(R)$ or $a\alpha^{m}(b) \in N_{*}(R)$ for some positive integer $m$, then $ab \in N_{*}(R)$.
\end{itemize}
\end{lem}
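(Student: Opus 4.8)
The plan is to derive both parts from one \emph{decoration principle}: in an $\alpha$-compatible ring, a product of ring elements is zero if and only if it stays zero after applying an arbitrary power of $\alpha$ to each factor independently. First I would rewrite membership in the prime radical in product form. Using the description $N_{*}(R)=\{x\in R : RxR\text{ is nilpotent}\}$ recalled in the introduction, $x\in N_{*}(R)$ is equivalent to the existence of a positive integer $t$ with $(RxR)^{t}=0$; since $R$ has a unity this means
\begin{equation*}
r_{0}\,x\,r_{1}\,x\,r_{2}\cdots x\,r_{t}=0\qquad\text{for all }r_{0},r_{1},\ldots,r_{t}\in R .
\end{equation*}
Thus $ab\in N_{*}(R)$ is nothing but the vanishing of every word $r_{0}(ab)r_{1}(ab)\cdots(ab)r_{t}$, and my goal is to toggle the $\alpha$'s sitting on the $a$'s or $b$'s inside such words.

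Next I would establish the decoration principle, namely that $x_{1}x_{2}\cdots x_{L}=0$ if and only if $\alpha^{j_{1}}(x_{1})\alpha^{j_{2}}(x_{2})\cdots\alpha^{j_{L}}(x_{L})=0$ for all exponents $j_{1},\ldots,j_{L}\geq 0$. It is enough to show that the exponent on a single factor may be raised by one without affecting the vanishing, and then iterate. Writing the word as $P\,y\,S$ with prefix $P$, distinguished factor $y=\alpha^{j_{i}}(x_{i})$ and suffix $S$, Lemma 2.1 (the $\alpha$-compatibility equivalences) turns $P(yS)=0$ into $P\alpha(yS)=P\alpha(y)\alpha(S)=0$; rereading this as $\big(P\alpha(y)\big)\alpha(S)=0$ and applying the equivalence $u\alpha(v)=0\Leftrightarrow uv=0$ to the \emph{single} element $v=S$ gives $\big(P\alpha(y)\big)S=0$, i.e. $P\alpha(y)S=0$. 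The reverse direction is symmetric. I expect this to be the main obstacle: a direct appeal to Lemma 2.1 only inserts or deletes $\alpha$ at the two ends of a product, whereas the $\alpha$'s I must move sit at \emph{interior} positions; the trick that makes it work is to over-decorate the whole suffix and then strip the surplus $\alpha$ off the suffix, viewed as one element, using the compatibility equivalence backwards.

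With the principle in hand the two parts are immediate. For $(1)$, given $ab\in N_{*}(R)$ I fix $t$ as above and regard each vanishing word $r_{0}(ab)r_{1}\cdots(ab)r_{t}$ as a product in the letters $r_{i},a,b$; applying the principle to replace every occurrence of $b$ by $\alpha^{k}(b)$ (and leaving the $a$'s and $r_{i}$'s undecorated) shows that every word $r_{0}\,a\alpha^{k}(b)\,r_{1}\cdots a\alpha^{k}(b)\,r_{t}$ vanishes, whence $\big(R\,a\alpha^{k}(b)\,R\big)^{t}=0$ and $a\alpha^{k}(b)\in N_{*}(R)$; decorating the $a$'s instead yields $\alpha^{k}(a)b\in N_{*}(R)$. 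For $(2)$ I run the same computation backwards: starting from the vanishing of all words in $\alpha^{m}(a)\,b$ (respectively $a\,\alpha^{m}(b)$), the principle lets me strip the $\alpha^{m}$ off the decorated letter to recover the vanishing of all words in $ab$, so $ab\in N_{*}(R)$. The only place where genuine work is needed is the interior-decoration step of the principle; everything else is bookkeeping on products.
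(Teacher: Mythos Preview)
Your proposal is correct and follows essentially the same idea as the paper: both arguments reduce to repeatedly applying $\alpha$-compatibility to slide an $\alpha$ onto an interior letter by first over-applying $\alpha$ to the whole suffix and then stripping it back off. Your packaging of this manoeuvre as a single ``decoration principle'' is cleaner, and your use of the general word $r_{0}(ab)r_{1}\cdots(ab)r_{t}$ (rather than the special power $(r_{1}abr_{2})^{n}$ the paper works with) is in fact more accurate, since nilpotency of $RabR$ requires all such mixed words to vanish, not merely that each element $r_{1}abr_{2}$ be nilpotent.
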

\begin{proof}
$(1)$ Let $ab \in N_{*}(R)$. This means $RabR$ is nilpotent. If $r_{1}abr_{2} \in RabR$ for some $r_{1}, r_{2} \in R$. Then there exist a positive integer $n$ such that $(r_{1}abr_{2})^{n} = 0$.\\
Now, $(r_{1}abr_{2})(r_{1}abr_{2})\ldots(r_{1}abr_{2}) = 0$ implies $(r_{1}abr_{2})(r_{1}abr_{2})(r_{1}abr_{2})\ldots r_{1}a \alpha^{k}(br_{2}) = 0$, since $R$ is $\alpha$-compatible ring.
 Also, $\alpha$ is an endomorphism, therefore
$(r_{1}abr_{2})(r_{1}abr_{2})(r_{1}abr_{2})\\ \ldots r_{1}a \alpha^{k}(b)\alpha^{k}(r_{2}) = 0$. Again, being $R$ an $\alpha$-compatible ring, we have, $(r_{1}abr_{2})(r_{1}abr_{2})(r_{1}abr_{2})\\ \ldots (r_{1}a \alpha^{k}(b)r_{2}) = 0$. This implies
$(r_{1}abr_{2})(r_{1}abr_{2})\ldots r_{1}a\alpha^{k}(br_{2}(r_{1}a\alpha^{k}(b)r_{2}) = 0$ and hence
$(r_{1}abr_{2})(r_{1}abr_{2})\ldots r_{1}a\alpha^{k}(b)\alpha^{k}(r_{2}(r_{1}a\alpha^{k}(b)r_{2}) = 0$. Therefore,
$(r_{1}abr_{2})(r_{1}abr_{2})\ldots r_{1}a\alpha^{k}(b)\\(r_{2}(r_{1}a\alpha^{k}(b)r_{2}) = 0$, due to $\alpha$-compatible ring and hence
$(r_{1}abr_{2})(r_{1}abr_{2})\ldots (r_{1}a\alpha^{k}(b)r_{2})\\(r_{1}a\alpha^{k}(b)r_{2}) = 0$. Continuing this process, we get
\begin{equation*}
(r_{1}a\alpha^{k}(b)r_{2})(r_{1}a\alpha^{k}(b)r_{2})\ldots(r_{1}a\alpha^{k}(b)r_{2}) = 0.
\end{equation*}
This shows that $(r_{1}a\alpha^{m}(b)r_{2})^{n} = 0$. Thus, $a\alpha^{k}(b) \in N_{*}(R)$. Similarly, we prove that $a\alpha^{k}(b) \in N_{*}(R)$.\\

$(2)$ Let $a\alpha^{m}(b) \in N_{*}(R)$. Then $r_{1}a\alpha^{m}(b)r_{2}$ is nilpotent for any $r_{1}, r_{2} \in R$. So, there exist a positive integer $t$ such that $(r_{1}a\alpha^{m}(b)r_{2})^{t} = 0$.\\
Therefore, $(r_{1}a\alpha^{m}(b)r_{2})(r_{1}a\alpha^{m}(b)r_{2})\ldots (r_{1}a\alpha^{m}(b)r_{2}) = 0$ implies
$(r_{1}a\alpha^{m}(b)\alpha^{m}(r_{2})(r_{1}a\alpha^{m}(b)r_{2})\\\ldots (r_{1}a\alpha^{m}(b)r_{2}) = 0$ and hence
$(r_{1}a\alpha^{m}(br_{2}(r_{1}a\alpha^{m}(b)r_{2})\ldots (r_{1}a\alpha^{m}(b)r_{2}) = 0$. This shows
$(r_{1}abr_{2}(r_{1}a\alpha^{m}(b)r_{2})\ldots (r_{1}a\alpha^{m}(b)r_{2}) = 0$. Therefore,
$(r_{1}abr_{2})(r_{1}a\alpha^{m}(b)\alpha^{m}(r_{2}(r_{1}a\alpha^{m}(b)r_{2})\\ \ldots (r_{1}a\alpha^{m}(b)r_{2}) = 0$. Again,
$(r_{1}abr_{2})(r_{1}a\alpha^{m}(br_{2}(r_{1}a\alpha^{m}(b)r_{2})\ldots (r_{1}a\alpha^{m}(b)r_{2}) = 0$. This implies
$(r_{1}abr_{2})(r_{1}abr_{2})(r_{1}a\alpha^{m}(b)r_{2})\ldots (r_{1}a\alpha^{m}(b)r_{2}) = 0$.
Continuing this process, we get $(r_{1}abr_{2})^{t} = 0$ and hence $ab \in N_{*}(R)$.\\
By same procedure we can proof other part.
\end{proof}
\begin{lem} Let $R$ be a semicommutative $\alpha$-compatible ring. Then
\begin{itemize}
\item[$(1)$]  $ab \in N_{*}(R) \Leftrightarrow a\alpha(b) \in N_{*}(R)$.
\item[$(2)$]  $a\alpha(a) \in N_{*}(R) \Rightarrow a \in N_{*}(R)$.
\end{itemize}
\end{lem}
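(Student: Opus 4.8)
The plan is to treat the two parts separately. Part $(1)$ should follow at once from the preceding lemma and use only $\alpha$-compatibility, whereas part $(2)$ will genuinely require the semicommutative hypothesis. For part $(1)$ I would simply specialize Lemma 2.2 to exponent $1$: the forward implication $ab \in N_{*}(R) \Rightarrow a\alpha(b) \in N_{*}(R)$ is Lemma 2.2$(1)$ with $k = 1$, and the reverse implication $a\alpha(b) \in N_{*}(R) \Rightarrow ab \in N_{*}(R)$ is Lemma 2.2$(2)$ with $m = 1$. No use of semicommutativity is needed here.

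For part $(2)$, the first step is a reduction: applying part $(1)$ with $b = a$ turns the hypothesis $a\alpha(a) \in N_{*}(R)$ into $a^{2} \in N_{*}(R)$. What then remains is the implication $a^{2} \in N_{*}(R) \Rightarrow a \in N_{*}(R)$, and this is exactly where the semicommutative hypothesis must be used, since for a general ring a square lying in the prime radical need not force the element itself into it. To carry this out I would first record the standard strengthening of semicommutativity, namely that $x_{1}x_{2}\cdots x_{k} = 0$ implies $x_{1}Rx_{2}R\cdots Rx_{k} = 0$, proved by inserting one ring element at a time via $xy = 0 \Rightarrow xRy = 0$. Then, using the characterization $N_{*}(R) = \{x \in R : RxR \text{ is nilpotent}\}$ recorded in the introduction, $a^{2} \in N_{*}(R)$ gives $(Ra^{2}R)^{n} = 0$ for some $n$, so that every product $s_{0}a^{2}s_{1}a^{2}\cdots a^{2}s_{n}$ vanishes. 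Writing each $a^{2}$ as $aa$ and inserting arbitrary elements between the two factors of every pair, the generalized semicommutativity above yields that every product $u_{0}au_{1}a\cdots au_{2n}$ is zero, i.e. $(RaR)^{2n} = 0$; hence $RaR$ is nilpotent and $a \in N_{*}(R)$.

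I expect the only real difficulty to be this passage from $a^{2}$ to $a$. Equivalently, one may phrase it as the fact that a semicommutative ring is $2$-primal, so that $N_{*}(R) = N(R)$ and the nilpotence of $a^{2}$ immediately forces the nilpotence of $a$; I would be prepared to fall back on this formulation if the direct insertion argument became awkward. The remaining bookkeeping, once the generalized semicommutativity identity is available, is routine.
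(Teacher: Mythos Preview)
Your proposal is correct and follows exactly the paper's approach: part~(1) is deduced directly from Lemma~2.2, and part~(2) is reduced via part~(1) to the implication $a^{2}\in N_{*}(R)\Rightarrow a\in N_{*}(R)$, which is then attributed to semicommutativity. The paper's proof simply asserts this last step without justification, whereas you supply the standard insertion argument (or the equivalent 2-primal reformulation); this added detail is welcome but does not constitute a different route.
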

\begin{proof} $(1)$ It follows by Lemma $(2.2)$.\\
$(2)$ Let $a\alpha(a) \in N_{*}(R)$. Then, from $(1)$, $a^{2} \in N_{*}(R)$ and being $R$ semicommutative, $a \in N_{*}(R)$.
\end{proof}
\begin{rem} Every $\alpha$-compatible and semicommutative ring is an $\alpha(*)$ ring.
\end{rem}
\begin{pro} \cite{CCC} If $R$ is $\alpha(*)$ rigid ring, then $N_{*}(R[x; \alpha]) \subseteq N_{*}(R)[x; \alpha]$.
\end{pro}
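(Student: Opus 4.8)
The plan is to pass to the quotient $\overline{R} := R/N_*(R)$ and exploit the fact that $\alpha(*)$-rigidity makes this quotient $\overline{\alpha}$-rigid, so that the cited result on rigid rings (Proposition (3) of \cite{C}) becomes available. First I would verify that the prime radical is $\alpha$-invariant, i.e. $\alpha(N_*(R)) \subseteq N_*(R)$; this is needed both for the induced map $\overline{\alpha}$ on $\overline{R}$ to be well defined and for $N_*(R)[x;\alpha]$ to be a genuine two-sided ideal of $R[x;\alpha]$. Invariance comes from Lemma (2.2)(1): specializing $b=1$ in the implication ``$ab \in N_*(R) \Rightarrow \alpha^{k}(a)b \in N_*(R)$'' yields $\alpha^{k}(a) \in N_*(R)$ whenever $a \in N_*(R)$. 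With $\overline{\alpha}$ in hand, the defining condition of $\alpha(*)$-rigidity translates verbatim: $\overline{a}\,\overline{\alpha}(\overline{a}) = 0$ in $\overline{R}$ means $a\alpha(a) \in N_*(R)$, which forces $a \in N_*(R)$, i.e. $\overline{a} = 0$. Hence $\overline{R}$ is an $\overline{\alpha}$-rigid ring, and by Proposition (3) of \cite{C} the skew polynomial ring $\overline{R}[x;\overline{\alpha}]$ is reduced, so $N_*(\overline{R}[x;\overline{\alpha}]) = 0$.

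Next I would set up the coefficient-reduction map $\pi : R[x;\alpha] \to \overline{R}[x;\overline{\alpha}]$ sending $\sum a_i x^{i} \mapsto \sum \overline{a_i} x^{i}$. The $\alpha$-invariance of $N_*(R)$ guarantees that $\pi$ is compatible with the twisted relation $xr = \alpha(r)x$, so $\pi$ is a surjective ring homomorphism whose kernel is exactly $N_*(R)[x;\alpha]$, the polynomials all of whose coefficients lie in $N_*(R)$. The resulting isomorphism $R[x;\alpha]/N_*(R)[x;\alpha] \cong \overline{R}[x;\overline{\alpha}]$ is the device that converts the claim into a statement about $\pi$.

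To finish, take any $f \in N_*(R[x;\alpha])$, so that $f$ is strongly nilpotent. Given an arbitrary sequence $c_1 = \pi(f)$, $c_{n+1} \in c_n\,\overline{R}[x;\overline{\alpha}]\,c_n$, surjectivity of $\pi$ lets me lift it term by term to a sequence $f_1 = f$, $f_{n+1} \in f_n\,R[x;\alpha]\,f_n$ with $\pi(f_n) = c_n$; since $f$ is strongly nilpotent this lifted sequence is eventually zero, hence so is the original. Thus $\pi(f)$ is strongly nilpotent, i.e. $\pi(f) \in N_*(\overline{R}[x;\overline{\alpha}]) = 0$, and therefore $f \in \ker \pi = N_*(R)[x;\alpha]$. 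This is precisely the inclusion $N_*(R[x;\alpha]) \subseteq N_*(R)[x;\alpha]$.

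I expect the main obstacle to be the very first step, namely confirming that $\overline{R}$ is genuinely $\overline{\alpha}$-rigid, and inside it the invariance $\alpha(N_*(R)) \subseteq N_*(R)$. Once invariance is secured the remaining transfer along the surjection $\pi$ is routine. It is worth flagging that if one assumes only the bare implication ``$a\alpha(a) \in N_*(R) \Rightarrow a \in N_*(R)$'' without the ambient $\alpha$-compatibility, then for a non-surjective $\alpha$ the invariance of $N_*(R)$ is no longer automatic; this is exactly the point at which the compatibility underlying $\alpha(*)$-rigidity (cf. Remark (2.2) and Lemma (2.2)) must be invoked.
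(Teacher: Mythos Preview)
The paper does not supply its own proof of this proposition; it is quoted verbatim from \cite{CCC}. Your overall strategy---pass to $\overline{R}=R/N_{*}(R)$, show that $\overline{R}$ is $\overline{\alpha}$-rigid, invoke Proposition~3 of \cite{C} to conclude that $\overline{R}[x;\overline{\alpha}]$ is reduced, and then pull back along the coefficient-reduction surjection $\pi$---is the natural one and is almost certainly what lies behind the cited result. Once the invariance $\alpha(N_{*}(R))\subseteq N_{*}(R)$ is in hand, every remaining step of your argument is correct.

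There is, however, a genuine gap at precisely the point you yourself flag. Your justification for $\alpha(N_{*}(R))\subseteq N_{*}(R)$ invokes Lemma~(2.2)(1), but that lemma is stated under the hypothesis that $R$ is $\alpha$-compatible, and $\alpha(*)$-rigidity does \emph{not} imply $\alpha$-compatibility. Remark~(2.2) runs in the opposite direction (compatible $+$ semicommutative $\Rightarrow$ $\alpha(*)$), so it cannot be used to recover compatibility after the fact. What $\alpha(*)$-rigidity does give for free is the \emph{reverse} containment $\alpha^{-1}(N_{*}(R))\subseteq N_{*}(R)$: if $\alpha(a)\in N_{*}(R)$ then $a\alpha(a)\in N_{*}(R)$ since $N_{*}(R)$ is an ideal, whence $a\in N_{*}(R)$. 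But the forward containment you actually need---both to make $\overline{\alpha}$ well defined and to make $N_{*}(R)[x;\alpha]$ a genuine two-sided ideal---does not follow from the bare implication ``$a\alpha(a)\in N_{*}(R)\Rightarrow a\in N_{*}(R)$'' when $\alpha$ is not surjective. In the source papers this invariance is typically taken as a standing hypothesis; compare the explicit clause ``with $\alpha$-ideal $N_{*}(R)$'' in Lemma~(3.2) and Theorem~(3.1) of the present paper. The cleanest repair is to add that hypothesis to the statement rather than attempt to derive it.
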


\begin{pro} Let $R$ be an $\alpha$-almost Armendariz ring. For $a, b \in R$, we have the following :
\begin{itemize}
\item[$(1)$] If $ab = 0$, then $a\alpha(b) \in N_{*}(R)$.
\item[$(2)$] If $a\alpha^{m}(b) = 0$ for some positive integer $m$, then $ab \in N_{*}(R)$.
\end{itemize}
\end{pro}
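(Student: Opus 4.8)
The plan is to convert each hypothesis into an honest vanishing product $f(x)g(x)=0$ in $R[x;\alpha]$ and then read off the desired element as one of the coefficient products $a_ib_j$, which the defining property of an $\alpha$-almost Armendariz ring places in $N_*(R)$. The only computational input is the skew relation $x^i r=\alpha^i(r)x^i$, so that the coefficient of $x^n$ in $\big(\sum_i a_ix^i\big)\big(\sum_j b_jx^j\big)$ is $\sum_{i+j=n}a_i\alpha^i(b_j)$. The key asymmetry to keep in mind is that the product occurring \emph{inside} $f(x)g(x)$ is the twisted one $a_i\alpha^i(b_j)$, whereas the conclusion of the definition controls the \emph{untwisted} product $a_ib_j$; this mismatch is exactly what separates the two parts.

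Part (2) is the direct case. Given $a\alpha^m(b)=0$, I would take $f(x)=ax^m$ and $g(x)=b$. Then $f(x)g(x)=ax^m b=a\alpha^m(b)x^m=0$, so the hypothesis is precisely the vanishing of this product. Since $a$ is the coefficient of $x^m$ in $f$ and $b$ is the constant coefficient of $g$, the $\alpha$-almost Armendariz property delivers $a\,b\in N_*(R)$, which is the claim. Here the twist $\alpha^m$ is absorbed into making the product zero, and the untwisted product $ab$ falls out for free.

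Part (1) is the subtle direction, and where I expect the real work. Now the hypothesis $ab=0$ is untwisted while the target $a\alpha(b)$ is twisted, so the trick above cannot simply be reused: taking $f(x)=a$, $g(x)=bx$ gives only $f(x)g(x)=abx=0$ and hence merely recovers $ab\in N_*(R)$. To expose $a\alpha(b)$ as a coefficient product I must force $a$ to sit in degree $\ge 1$ (so the skew relation manufactures the $\alpha$) while arranging $\alpha(b)$ to occur as a coefficient of $g$. The natural candidate is a telescoping pair such as $f(x)=ax-a$ together with $g(x)=\sum_{j=0}^{k}\alpha^{j}(b)x^{j}$: the constant term of $f(x)g(x)$ is $-ab=0$, and for $1\le n\le k$ the coefficient of $x^n$ is $-a\alpha^{n}(b)+a\alpha(\alpha^{n-1}(b))=0$, so all interior terms cancel using only $ab=0$; the coefficient pair $a_1=a$, $b_1=\alpha(b)$ would then push $a_1b_1=a\,\alpha(b)$ into $N_*(R)$.

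\textbf{The main obstacle} is closing off the top degree. If $g$ has degree $k$, the coefficient of $x^{k+1}$ in $f(x)g(x)$ is $a\alpha(\alpha^{k}(b))=a\alpha^{k+1}(b)$, and $ab=0$ gives no control over this term, so the telescoping product is not actually zero for a generic finite $g$. Reconciling this is the crux: one must either invoke extra structure that makes the chain wrap around — for instance $\alpha^{t}=\mathrm{id}$, under which choosing $\deg g=t-1$ turns the top term into $a\alpha^{t}(b)=ab=0$ — or an $\alpha$-compatibility hypothesis, under which $ab=0$ already forces $a\alpha(b)=0$ and the statement becomes immediate (with $N_*(R)$ not even needed). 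I would therefore expect the clean form of Part (1) to rest on one of these additional assumptions, and the heart of the proof to be the verification that the chosen finite product genuinely vanishes.
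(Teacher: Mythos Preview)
Your treatment of Part~(2) is essentially the paper's: the paper takes $p(x)=ax^{m}$ and $q(x)=bx$, computes $p(x)q(x)=a\alpha^{m}(b)x^{m+1}=0$, and reads off $ab\in N_{*}(R)$. Your choice $g(x)=b$ works identically.

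For Part~(1) you have correctly diagnosed the asymmetry but missed the simple device the paper uses, and as a result you end up without a proof. The missing idea is this: rather than trying to manufacture $\alpha(b)$ as a coefficient of $g$, put $\alpha(a)$ as a coefficient of $f$. Concretely, set $p(x)=\alpha(a)x$ and $q(x)=bx$; then
\[
p(x)q(x)=\alpha(a)\,x\,b\,x=\alpha(a)\alpha(b)x^{2}=\alpha(ab)x^{2}=0,
\]
using only that $\alpha$ is an endomorphism. The $\alpha$-almost Armendariz hypothesis now gives $\alpha(a)\cdot b\in N_{*}(R)$ directly---no telescoping, no periodicity assumption. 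Your telescoping construction was trying to close off a top-degree term that this approach never creates.

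One caveat worth noting: the paper's argument literally yields $\alpha(a)b\in N_{*}(R)$, whereas the statement is printed as $a\alpha(b)\in N_{*}(R)$. Your instinct that the literal conclusion $a\alpha(b)\in N_{*}(R)$ would seem to demand extra structure (compatibility or finite order) is sound; the discrepancy is a slip in the statement rather than in your reasoning.
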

\begin{proof}
\begin{itemize}
\item[$(1)$] Let $ab = 0$, $a, b \in R$. Assume $p(x) = \alpha(a)x \in R[x, \alpha]$, $q(x) = bx \in R[x; \alpha]$, then $p(x)q(x) = \alpha(a)x bx = \alpha(a)\alpha(b)x^{2} = \alpha(ab)x^{2} = 0$. Therefore, $\alpha(a)b \in N_{*}(R)$, since $R$ is an $\alpha$-almost Armendariz ring.
\item[$(2)$] Let $a\alpha^{m}(b) = 0$ for some positive integer $m \geq 1$. Suppose $p(x) = ax^{m}$, $q(x) = bx \in R[x; \alpha]$, therefore $p(x)q(x) = ax^{m}bx = a\alpha^{m}(b)x^{2} = 0$. Hence $ab \in N_{*}(R)$, since $R$ is an $\alpha$-almost Armendariz ring.
\end{itemize}
\end{proof}

\begin{thm} Let $R$ be a semicommutative $\alpha$-compatible ring. If $R[x; \alpha]$ is an almost Armendariz ring, then $R$ is an $\alpha$-almost Armendariz ring.
\end{thm}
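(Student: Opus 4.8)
The plan is to package the skew relation $f(x)g(x)=0$ into an honest vanishing product of ordinary (commuting-variable) polynomials over the ring $S := R[x;\alpha]$, so that the assumed almost Armendariz property of $S$ can be applied, and then to descend the resulting nilpotency statements from $S$ back to $R$ using $\alpha$-compatibility together with the description of $N_{*}(S)$.

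Concretely, write $f(x) = \sum_{i=0}^{m} a_{i} x^{i}$ and $g(x) = \sum_{j=0}^{n} b_{j} x^{j}$, so that $f(x)g(x)=0$ is equivalent to the coefficient equations $\sum_{i+j=k} a_{i}\alpha^{i}(b_{j}) = 0$ for every $k$. First I would introduce a central indeterminate $y$ and set $F(y) = \sum_{i=0}^{m} (a_{i} x^{i})\, y^{i}$ and $G(y) = \sum_{j=0}^{n} (b_{j} x^{j})\, y^{j}$ in $S[y]$. Computing the product coefficientwise in $S$ and using $(a_{i} x^{i})(b_{j} x^{j}) = a_{i}\alpha^{i}(b_{j}) x^{i+j}$, the coefficient of $y^{k}$ in $F(y)G(y)$ collapses to $\big(\sum_{i+j=k} a_{i}\alpha^{i}(b_{j})\big) x^{k}$, which is $0$ by the equations above; hence $F(y)G(y)=0$ in $S[y]$.

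Since $S = R[x;\alpha]$ is almost Armendariz by hypothesis, applying that property to $F(y)G(y)=0$ yields $(a_{i} x^{i})(b_{j} x^{j}) = a_{i}\alpha^{i}(b_{j}) x^{i+j} \in N_{*}(S)$ for all $i,j$. To pull this back to $R$, I would use that $R$, being semicommutative and $\alpha$-compatible, is $\alpha(*)$-rigid (Remark 2.2), so Proposition 2.3 gives $N_{*}(R[x;\alpha]) \subseteq N_{*}(R)[x;\alpha]$. As $a_{i}\alpha^{i}(b_{j}) x^{i+j}$ is a single monomial, membership in $N_{*}(R)[x;\alpha]$ forces its lone coefficient $a_{i}\alpha^{i}(b_{j})$ to lie in $N_{*}(R)$. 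Finally, $\alpha$-compatibility through Lemma 2.2(2) upgrades $a_{i}\alpha^{i}(b_{j})\in N_{*}(R)$ to $a_{i} b_{j} \in N_{*}(R)$ (the case $i=0$ being immediate), which is exactly the $\alpha$-almost Armendariz conclusion.

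The genuinely delicate point is the construction in the second step: one must insert the factors $x^{i}$ alongside $y^{i}$ so that, after the skew multiplication $x^{i} b_{j} = \alpha^{i}(b_{j}) x^{i}$ is carried out, the $y^{k}$-coefficients reproduce precisely the vanishing coefficients of $f(x)g(x)$. The remaining work is essentially bookkeeping built on the already-established structural facts; the two things I would check carefully are that $y$ is central in $S[y]$ so the product expands as expected, and that the descent from $N_{*}(S)$ to $N_{*}(R)$ is legitimate monomial-by-monomial.
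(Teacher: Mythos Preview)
Your proof is correct and follows essentially the same route as the paper: the paper also forms $s(y)=\sum_i (a_i x^i)y^i$ and $t(y)=\sum_j (b_j x^j)y^j$ in $(R[x;\alpha])[y]$, uses the almost Armendariz property of $R[x;\alpha]$ to get $a_i x^i b_j x^j\in N_{*}(R[x;\alpha])$, invokes Proposition~2.3 to descend to $a_i\alpha^i(b_j)\in N_{*}(R)$, and finishes with Lemma~2.2. If anything, you are more careful in justifying why Proposition~2.3 applies (via Remark~2.2) and in verifying $F(y)G(y)=0$ coefficientwise, steps the paper leaves implicit.
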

\begin{proof} Let $R[x; \alpha]$ be an almost Armendariz ring and $f(x) = \sum_{i = 0}^{m}a_{i}x^{i}, g(x) = \sum_{j = 0}^{n}b_{j}x^{j} \in R[x; \alpha]$ such that $f(x)g(x) = 0$. Then $s(y)t(y) = 0$, where $s(y) = a_{0}+(a_{1}x)y+(a_{2}x^{2})y^{2}+\cdots+(a_{m}x^{m})y^{m}$ and $t(y) = b_{0}+(b_{1}x)y+(b_{2}x^{2})y^{2}+\cdots+(b_{n}x^{n})y^{n} \in (R[x; \alpha])[y]$. Since $R[x; \alpha]$ is an almost Armendariz ring, $a_{i}x^{i}b_{j}x^{j} \in N_{*}(R[x; \alpha])$ for each $i, j$. Also by Proposition (2.3), $a_{i}\alpha^{i}(b_{j}) \in N_{*}(R)$ for each $i, j$. Finally, by Lemma $(2.2)$, $a_{i}b_{j} \in N_{*}(R)$.
\end{proof}

\begin{pro} An $\alpha$-compatible semicommutative ring is an $\alpha$-almost Armendariz ring.
\end{pro}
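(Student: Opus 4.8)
The plan is to prove the statement directly from the definition. Write $f(x)=\sum_{i=0}^{m}a_{i}x^{i}$ and $g(x)=\sum_{j=0}^{n}b_{j}x^{j}$ in $R[x;\alpha]$ with $f(x)g(x)=0$. First I would expand the skew product using $x^{i}b_{j}=\alpha^{i}(b_{j})x^{i}$, so that $f(x)g(x)=0$ is equivalent to the system of relations $\sum_{i+j=k}a_{i}\alpha^{i}(b_{j})=0$ for each $0\le k\le m+n$. Since $R$ is $\alpha$-compatible, Lemma $(2.2)$ gives $a_{i}b_{j}\in N_{*}(R)\Leftrightarrow a_{i}\alpha^{i}(b_{j})\in N_{*}(R)$; hence it suffices to prove $a_{i}\alpha^{i}(b_{j})\in N_{*}(R)$ for all $i,j$, and a final application of Lemma $(2.2)$ will then deliver $a_{i}b_{j}\in N_{*}(R)$, which is exactly the $\alpha$-almost Armendariz conclusion.

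The engine is the classical Armendariz descending induction on $i+j$. The top relation reads $a_{m}\alpha^{m}(b_{n})=0$, so $a_{m}b_{n}=0\in N_{*}(R)$ already. For the inductive step I would multiply the degree-$k$ relation on the right by suitable coefficients $b_{j}$, and use semicommutativity ($uv=0\Rightarrow uRv=0$) to annihilate the already-treated top-degree terms, exactly as in the reduced-ring argument. The crucial difference is that here the base ring is only semicommutative, so each peeled factor $a_{i}\alpha^{i}(b_{j})$ emerges only as a factor of an expression known to be nilpotent, and one concludes $a_{i}\alpha^{i}(b_{j})\in N_{*}(R)$ rather than equal to $0$. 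At every stage I would invoke Lemma $(2.2)$ and Lemma $(2.3)$ to absorb the powers of $\alpha$ that arise when $x$ is commuted past coefficients, so that inserting or deleting an $\alpha$ never moves a product out of $N_{*}(R)$, and I would use that $N_{*}(R)$ is an ideal closed under the operations of Lemma $(2.2)$ to keep the accumulating products inside it.

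The main obstacle is precisely this bookkeeping. Unlike the reduced case, multiplying the relations together and invoking semicommutativity produces elements that are only guaranteed to be nilpotent, whereas the definition demands membership in the prime radical $N_{*}(R)$, i.e. strong nilpotence. This is where the semicommutativity hypothesis does the real work, through the fact (from \cite{S}) that a semicommutative ring is already almost Armendariz, in tandem with the $\alpha$-compatibility Lemmas $(2.2)$ and $(2.3)$, which certify that the $\alpha$-twists appearing in the skew relations are invisible to $N_{*}(R)$. Example $(2.1)$ indicates why $\alpha$-compatibility cannot be dropped: without it the twist $\alpha^{i}(b_{j})$ can convert a genuine zero product into a unit, so the absorption Lemmas $(2.2)$ and $(2.3)$ are the indispensable ingredient that makes the descending induction stay inside the prime radical.

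An alternative, higher-level route would be to invoke Theorem $(2.1)$: it then remains only to verify that $R[x;\alpha]$ is itself an almost Armendariz ring, for which it suffices to show that $\alpha$-compatibility forces $R[x;\alpha]$ to be semicommutative, whence it is almost Armendariz by \cite{S}. I expect establishing semicommutativity of $R[x;\alpha]$ to require the very same $\alpha$-absorption arguments via Lemma $(2.2)$, so this route relocates rather than removes the central difficulty, and I would therefore favour the direct induction above.
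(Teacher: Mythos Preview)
Your plan matches the paper's proof: expand $f(x)g(x)=0$ into the system $\sum_{i+j=l}a_{i}\alpha^{i}(b_{j})=0$, induct on $i+j$, and combine semicommutativity with Lemmas~(2.2) and~(2.3) to peel off one product at a time into $N_{*}(R)$. The only cosmetic difference is orientation: the paper runs the induction \emph{upward} from $i+j=0$ and multiplies each relation on the \emph{left} by $b_{0},b_{1},\ldots$ (isolating $b_{0}a_{l}\alpha^{l}(b_{0})\in N_{*}(R)$, then using Lemma~(2.2) and semicommutativity to get $(b_{0}a_{l})^{2}\in N_{*}(R)$ and hence $a_{l}b_{0}\in N_{*}(R)$), whereas you descend from the top and multiply on the right --- the mechanism is the same.
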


\begin{proof} Let $f(x) = \sum_{i=0}^{m}a_{i}x^{i}$ and $g(x) = \sum_{j=0}^{n}b_{j}x^{j} \in R[x; \alpha]$ such that \\$f(x)g(x) =  (\sum_{i=0}^{m}a_{i}x^{i})(\sum_{j=0}^{n}b_{j}x^{j}) = \sum_{l=0}^{m+n} (\sum_{i+j = l} a_{i}\alpha^{i}(b_{j}))x^{l} = 0$. Then\\
\begin{equation*}
\sum_{i+j = l} a_{i}\alpha^{i}(b_{j}) = 0, ~~~~~~~~~~~~~~~~~~    l = 0,1, 2, \ldots, m+n.
\end{equation*}
To prove $a_{i}b_{j} \in N_{*}(R)$, we use induction on $i+j$. If $i+j = 0$, then $a_{0}b_{0} = 0$, therefore $a_{0}b_{0} \in N_{*}(R)$. Now, assume that result is true for $i+j<l$, where $l$ is a positive integer, i.e. $a_{i}b_{j} \in N_{*}(R)$, for $i+j<l$. To prove $a_{i}b_{j} \in N_{*}(R)$, for $i+j = l$, we use the coefficient of $x^{l}$ in product of $f(x)g(x)$, which is \\
\begin{equation}
a_{0}b_{l}+a_{1}\alpha(b_{l-1})+a_{2}\alpha^{2}(b_{l-2})+\ldots+a_{l}\alpha^{l}(b_{0}) = 0
\end{equation}
Multiplying by $b_{0}$ from left in equation $(2.1)$, we have
\begin{equation*}
b_{0}a_{l}\alpha^{l}(b_{0}) = -(b_{0}a_{0}b_{l}+b_{0}a_{1}\alpha(b_{l-1})+b_{0}a_{2}\alpha^{2}(b_{l-2})+\ldots+b_{0}a_{l-1}\alpha^{l-1}(b_{1})).
\end{equation*}
Since $a_{i}b_{0} \in N_{*}(R)$ for each $i = 0, 1, 2, \ldots, (l-1)$ and $b_{0}a_{i} \in N_{*}(R)$ for each $i = 0, 1, 2, \ldots, (l-1)$. Therefore $b_{0}a_{l}\alpha^{l}(b_{0})\in N_{*}(R)$. Hence, $b_{0}a_{l}\alpha^{l}(b_{0})\alpha^{l}(a_{l})\in N_{*}(R)$ this implies $b_{0}a_{l}\alpha^{l}(b_{0}a_{l})\in N_{*}(R)$. Therefore, by Lemma(2.2), $(b_{0}a_{l})^{2} \in N_{*}(R)$ and hence $(b_{0}a_{l}) \in N_{*}(R)$. Therefore, $a_{l}b_{0} \in N_{*}(R)$. Again, multiplying by $b_{1}$ in $(2.1)$, we have\\
\begin{equation*}
b_{1}a_{l-1}\alpha^{l-1}(b_{1}) = -(b_{1}a_{0}b_{l}+b_{1}a_{1}\alpha(b_{l-1})+b_{1}a_{2}\alpha^{2}(b_{l-2})+\ldots+b_{1}a_{l}\alpha^{l}(b_{0})) \in N_{*}(R)
\end{equation*}
$b_{1}a_{l-1}\alpha^{l-1}(b_{1}) \in N_{*}(R)$, so $b_{1}a_{l-1}\alpha^{l-1}(b_{1}a_{l-1}) \in N_{*}(R)$. By above Lemma (2.2), $b_{1}a_{l-1} \in N_{*}(R)$. Continuing this process, we have $a_{0}b_{l}, a_{1}b_{l-1}, a_{2}b_{l-2}, \ldots, a_{l}b_{0} \in N_{*}(R)$. Therefore, $a_{i}b_{j} \in N_{*}(R)$ for each $i, j$ where $0 \leq i \leq m$ and $0 \leq j \leq n$. Thus, $R$ is $\alpha$-almost Armendariz ring.
\end{proof}
Also by following the extension of an endomorphism $\alpha$ of a ring $R$ to its ring of polynomials $R[x]$, given by Hong et al. in \cite{CC}, we have the following result:
\begin{pro} Let $\alpha$ be an endomorphism of a ring $R$ and $\alpha^{k} = I_{R}$ for some positive integer $k$. Then $R$ is an $\alpha$-almost Armendariz ring if and only if $R[x]$ is an $\alpha$-almost Armendariz ring.
\end{pro}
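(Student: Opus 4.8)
The plan is to read the statement as the equivalence ``$R$ is $\alpha$-almost Armendariz $\iff R[x]$ is $\overline{\alpha}$-almost Armendariz'', where $\overline{\alpha}$ is the extension of $\alpha$ to $R[x]$ fixing $x$, and to prove the two implications separately. Throughout I would use the classical fact that the prime radical commutes with polynomial extension, $N_{*}(R[x]) = N_{*}(R)[x]$; in particular $N_{*}(R[x]) \cap R = N_{*}(R)$, and a polynomial in $R[x]$ lies in $N_{*}(R[x])$ exactly when each of its coefficients lies in $N_{*}(R)$. The reverse implication is the easy one: identifying $R$ with the constant polynomials, $R$ is a subring of $R[x]$ with $\overline{\alpha}(R) = \alpha(R) \subseteq R$ and $\overline{\alpha}|_{R} = \alpha$, and the skew ring $R[z;\alpha]$ testing $R$ embeds into $R[x][u;\overline{\alpha}]$ as the polynomials in the skew variable with constant coefficients. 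Hence Remark 2.1(2) applies and shows $R$ is $\alpha$-almost Armendariz; the only verification is that a product of two elements of $R$ landing in $N_{*}(R[x])$ already lies in $N_{*}(R)$, which is immediate from $N_{*}(R[x])\cap R = N_{*}(R)$.

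For the forward implication the key device is a degree-separating substitution into a single skew polynomial ring. The ring testing $R[x]$ is $R[x][u;\overline{\alpha}]$, where $u\,r(x) = \overline{\alpha}(r(x))\,u$, so that $ua = \alpha(a)u$ for $a \in R$ and $ux = xu$. Using $\alpha^{k} = I_{R}$, I would define a ring homomorphism $\Phi : R[x][u;\overline{\alpha}] \to R[z;\alpha]$ by $\Phi|_{R} = \mathrm{id}$, $\Phi(x) = z^{c}$ and $\Phi(u) = z$, where $c$ is a multiple of $k$ chosen strictly larger than the $u$-degrees of the two given polynomials. Well-definedness is exactly where $\alpha^{k} = I_{R}$ is used: one checks $z^{c}a = \alpha^{c}(a)z^{c} = az^{c}$ since $k \mid c$, matching the centrality of $x$; that $za = \alpha(a)z$ matches the skew relation $ua = \alpha(a)u$; and $z^{c}z = z\,z^{c}$ matches $ux = xu$.

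Now take $F = \sum_{i,s} a_{is}x^{s}u^{i}$ and $G = \sum_{j,t} b_{jt}x^{t}u^{j}$ in $R[x][u;\overline{\alpha}]$ with $FG = 0$. Applying $\Phi$ gives $\Phi(F)\Phi(G) = 0$ in $R[z;\alpha]$, with $\Phi(F) = \sum_{i,s} a_{is}z^{cs+i}$ and $\Phi(G) = \sum_{j,t} b_{jt}z^{ct+j}$. Because $c$ exceeds every $u$-degree, the exponents $cs+i$ are pairwise distinct (recover $i$ as the residue modulo $c$ and $s$ as the quotient), so the coefficients of $\Phi(F)$ are precisely the $a_{is}$ and those of $\Phi(G)$ precisely the $b_{jt}$. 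Since $R$ is $\alpha$-almost Armendariz, every product of a coefficient of $\Phi(F)$ by a coefficient of $\Phi(G)$ lies in $N_{*}(R)$, i.e. $a_{is}b_{jt} \in N_{*}(R)$ for all $i,s,j,t$. Finally each $u$-coefficient product $A_{i}B_{j} = p_{i}(x)q_{j}(x)$ expands as $\sum_{s,t} a_{is}b_{jt}x^{s+t}$, whose every coefficient is a finite sum of elements of the ideal $N_{*}(R)$, hence lies in $N_{*}(R)$; thus $A_{i}B_{j} \in N_{*}(R)[x] = N_{*}(R[x])$, and $R[x]$ is $\overline{\alpha}$-almost Armendariz.

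The main obstacle, and the only essential use of $\alpha^{k}=I_{R}$, is producing $\Phi$ as a genuine ring homomorphism: a single substitution must simultaneously respect the skew relation $ua = \alpha(a)u$ (which forces $\Phi(u)$ to twist $R$ by $\alpha$, handled by $\Phi(u)=z$) and the centrality of $x$ (which forces $\Phi(x)$ to centralize $R$, handled by $\Phi(x)=z^{c}$ with $k \mid c$). The size condition on $c$ is then only the bookkeeping that keeps distinct monomials in distinct degrees, so that the individual products $a_{is}b_{jt}$ can be read off the coefficients of $\Phi(F)$ and $\Phi(G)$.
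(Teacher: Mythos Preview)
Your argument is correct and follows the paper's strategy: collapse $R[x][y;\overline{\alpha}]$ into a single skew polynomial ring $R[z;\alpha]$ by a degree-separating substitution, apply the $\alpha$-almost Armendariz hypothesis on $R$ to the resulting coefficients, and then regroup using $N_{*}(R[x])=N_{*}(R)[x]$. The only difference is which variable is sent to the large power---the paper substitutes $y\mapsto x^{\ell k+1}$ while you send $x\mapsto z^{c}$ (with $k\mid c$) and $u\mapsto z$; your choice has the minor expository advantage that $z^{c}$ genuinely centralises $R$, so $\Phi$ is visibly a ring homomorphism and the implication $FG=0\Rightarrow\Phi(F)\Phi(G)=0$ requires no separate verification.
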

\begin{proof}
 Let $p(y) = f_{0}(x) + f_{1}(x)y + \cdots + f_{m}(x)y^{m}$, $q(y) = g_{0} + g_{1}(x)y +\cdots +g_{n}y^{n} \in R[x][y; \alpha]$ such that $p(y)q(y) = 0$, where $f_{i}(x), g_{j}(x) \in R[x]$. Write $f_{i}(x) = a_{i0} + a_{i1}x + \cdots + a_{is_{i}}x^{s_{i}}$, $g_{j}(x) = b_{j0} + b_{j1}x + \cdots + b_{jt_{j}}x^{t_{j}}$, for each $0 \leq i \leq m$ and $0 \leq j \leq n$, where $a_{i0}, a_{i1}, \ldots, a_{is_{i}}, b_{j0}, b_{j1}, \ldots, b_{jt_{j}} \in R$. We have to show $f_{i}(x)g_{j}(x) \in N_{*}(R[x])$, for each $0 \leq i \leq m$ and $0 \leq j \leq n$. Choose a positive integer $l$ such that $l > deg(f_{0}(x)) + deg(f_{1}(x)) + \cdots + deg(f_{m}(x)) + deg(g_{0}(x)) + deg (g_{1}(x)) + \cdots + deg(g_{n}(x))$.
Now, put
\begin{eqnarray*}
p(x^{lt+1})&=&f(x) = f_{0}(x) + f_{1}(x)x^{lt+1} + f_{2}x^{2lt+2} + \cdots + f_{m}(x)x^{mlt+m};\\
q(x^{lt+1})&=&g(x) = g_{0}(x) + g_{1}(x)x^{lt+1} + g_{2}x^{2lt+2} + \cdots + g_{n}x^{nlt+n}.
\end{eqnarray*}
Then $p(x^{lt+1}), q(x^{lt+1}) \in R[x]$ and coefficients of $p(x^{lt+1})$ and $q(x^{lt+1})$ are equal to the sets of coefficients of $f_{i}$ and $g_{j}$ respectively. Since $p(y)q(y) = 0 \in R[x][y; \alpha]$ and $x$ commutes with the elements of $R$ in the polynomials of $R[x]$ and $\alpha^{k} = I_{R}$, we have $p(x^{lt+1})q(x^{lt+1}) = 0 \in R[x; \alpha]$. Since $R$ is an $\alpha$-almost Armendariz ring, we have $a_{ic}b_{jd} \in N_{*}(R),$ for all $0 \leq i \leq m$, $0 \leq j \leq n$, $c \in \{0, 1, \ldots, s_{i}\}$ and $d \in \{0, 1, \ldots, t_{j}\}$. Therefore, $f_{i}(x)g_{j}(x) \in N_{*}(R)[x] = N_{*}(R[x]),$ for all $0 \leq i \leq m$ and $0 \leq j \leq n$. Thus, $R[x]$ is an  $\alpha$-almost Armendariz ring.
\end{proof}
\begin{pro}
Let $R$ be an abelian ring with $\alpha (e) = e$ for an idempotent element $e \in R$. Then the following statements are equivalent:
\begin{itemize}
\item[$(1)$] $R$ is an $\alpha$-almost Armendariz ring.
\item[$(2)$] $eR$ and $(1-e)R$ are $\alpha$-almost Armendariz rings.
\end{itemize}
\end{pro}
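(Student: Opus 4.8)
The plan is to exploit the ring direct-product decomposition induced by the central idempotent $e$ and to carry it through both to the skew polynomial ring and to the prime radical. Since $R$ is abelian, $e$ is central, so $R = eR \oplus (1-e)R$ is a direct product of the rings $eR$ (with identity $e$) and $(1-e)R$ (with identity $1-e$). Because $\alpha(e) = e$ (and $\alpha(1)=1$, so $\alpha(1-e)=1-e$), the map $\alpha$ sends $eR$ into $eR$ and $(1-e)R$ into $(1-e)R$; write $\alpha_1 = \alpha|_{eR}$ and $\alpha_2 = \alpha|_{(1-e)R}$ for the induced endomorphisms, which are what ``$\alpha$-almost Armendariz'' means for the two corner rings. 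Moreover $e$ commutes with $x$ in $R[x;\alpha]$, since $xe = \alpha(e)x = ex$, and $e$ is central in $R$; hence $e$ is a central idempotent of $R[x;\alpha]$ and one obtains a ring isomorphism $R[x;\alpha] \cong eR[x;\alpha_1] \times (1-e)R[x;\alpha_2]$, under which $f = \sum_i a_i x^i$ corresponds to the pair $\big(\sum_i ea_i x^i,\ \sum_i (1-e)a_i x^i\big)$.

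Next I would record the behaviour of the prime radical. Since the prime ideals of a finite direct product of rings are precisely the pullbacks of the prime ideals of the factors, the prime radical splits as $N_*(R) = N_*(eR) \oplus N_*((1-e)R)$, with $N_*(eR) \subseteq eR$ and $N_*((1-e)R) \subseteq (1-e)R$. In particular $eR \cap N_*(R) = N_*(eR)$ and $(1-e)R \cap N_*(R) = N_*((1-e)R)$, and for any $a, b \in R$ one has $ab \in N_*(R)$ if and only if $(ea)(eb) \in N_*(eR)$ and $((1-e)a)((1-e)b) \in N_*((1-e)R)$, using the identity $ab = (ea)(eb) + ((1-e)a)((1-e)b)$ that follows from the centrality and idempotency of $e$.

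With these two facts the equivalence follows formally. For $(2) \Rightarrow (1)$, take $f(x)g(x) = 0$ in $R[x;\alpha]$ with $f = \sum_i a_i x^i$ and $g = \sum_j b_j x^j$; applying the isomorphism gives $(ef)(eg) = 0$ in $eR[x;\alpha_1]$ and $((1-e)f)((1-e)g) = 0$ in $(1-e)R[x;\alpha_2]$. The $\alpha_1$- and $\alpha_2$-almost Armendariz hypotheses then yield $(ea_i)(eb_j) \in N_*(eR)$ and $((1-e)a_i)((1-e)b_j) \in N_*((1-e)R)$ for all $i,j$, whence $a_i b_j \in N_*(R)$ by the displayed splitting. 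For $(1) \Rightarrow (2)$, a relation $f(x)g(x) = 0$ with $f, g \in eR[x;\alpha_1]$ may be read as a relation in $R[x;\alpha]$, since the products $a_i \alpha^i(b_j)$ and $a_i \alpha_1^i(b_j)$ coincide for coefficients in $eR$; the $\alpha$-almost Armendariz property of $R$ gives $a_i b_j \in N_*(R)$, and as $a_i b_j \in eR$ we conclude $a_i b_j \in eR \cap N_*(R) = N_*(eR)$, so $eR$ is $\alpha_1$-almost Armendariz, and symmetrically for $(1-e)R$.

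The routine verifications are the isomorphism $R[x;\alpha] \cong eR[x;\alpha_1] \times (1-e)R[x;\alpha_2]$ and the computation $e\,(a_i\alpha^i(b_j)) = (ea_i)\,\alpha_1^i(eb_j)$ that makes the coordinate projections multiplicative, both hinging on $e$ being central and $\alpha$-invariant. The only genuinely delicate point is the prime-radical splitting $N_*(R) = N_*(eR) \oplus N_*((1-e)R)$; I would justify it via the structure of prime ideals of a direct product, and this is the step to state with care, since the whole argument rests on translating the condition ``$ab \in N_*(R)$'' back and forth between $R$ and its two corner rings.
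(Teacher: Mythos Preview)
Your argument is correct and follows essentially the same route as the paper: for $(2)\Rightarrow(1)$ both you and the authors multiply the relation $f(x)g(x)=0$ by $e$ and $1-e$, apply the corner hypotheses, and add; for $(1)\Rightarrow(2)$ the paper simply invokes the subring-inheritance property (Remark~2.1(2)), which is exactly your ``read the relation in $eR[x;\alpha_1]$ as one in $R[x;\alpha]$'' step. The main difference is that you make explicit the splitting $N_*(R)=N_*(eR)\oplus N_*((1-e)R)$ and the identification $eR\cap N_*(R)=N_*(eR)$, whereas the paper silently writes $ea_ib_j\in N_*(R)$ without distinguishing $N_*(eR)$ from $N_*(R)$; your care on this point is warranted, since that identification is precisely what makes the argument go through.
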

\begin{proof} $(1) \Rightarrow (2)$ is obvious, since subring of an $\alpha$-almost Armendariz ring is $\alpha$-almost Armendariz ring.\\
$(2) \Rightarrow (1):$ Let $f(x) = \sum _{i=0}^{m}a_{i}x^{i}$, $g(x) = \sum _{j=0}^{n}b_{j}x^{j} \in R[x; \alpha]$ such that $fg = 0$. Then $(ef(x))(eg(x)) = 0$ and $(1-e)f(x) (1-e)g(x) = 0$. Since $eR$ is an $\alpha$-almost Armendariz ring, therefore $ea_{i}b_{j} \in N_{*}(R)$. Similarly, $(1-e)a_{i}b_{j} \in N_{*}(R)$, since $(1-e)R$ is also an $\alpha$-almost Armendariz ring. Therefore, $a_{i}b_{j} \in N_{*}(R)$ for each $i, j,$ where $0 \leq i \leq m$ and $0 \leq j \leq n$. Thus, $R$ is an $\alpha$-almost Armendariz ring.
\end{proof}

\begin{pro} Let $R$ be $\alpha$-compatible ring. If $R$ is $\alpha$-almost Armendariz ring and $a^{2} = 0$, $b^{2} = 0$, then $aba \in N_{*}(R)$ and hence $ab, a+b \in N(R)$.
\end{pro}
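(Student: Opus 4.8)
The plan is to prove the core assertion $aba \in N_{*}(R)$ by exhibiting an explicit vanishing product $f(x)g(x)=0$ in $R[x;\alpha]$ whose coefficient lists contain $ab$ (on the $f$ side) and $a$ (on the $g$ side), and then simply reading off the conclusion from the $\alpha$-almost Armendariz property, which asserts that \emph{every} coefficient product $a_ib_j$ lies in $N_{*}(R)$. The statements $ab, a+b \in N(R)$ will then drop out purely formally from the fact that $N_{*}(R)$ is a two-sided ideal contained in $N(R)$.

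Concretely, I would take $f(x) = a + ab\,x$ and $g(x) = a - b\alpha(a)\,x$ in $R[x;\alpha]$. Expanding with the rule $xr = \alpha(r)x$, the product has degree-$0$ coefficient $a\cdot a$, degree-$1$ coefficient $a\bigl(-b\alpha(a)\bigr) + ab\,\alpha(a)$, and degree-$2$ coefficient $ab\,\alpha\bigl(-b\alpha(a)\bigr)$. The degree-$0$ term vanishes since $a^{2}=0$; the degree-$1$ term is $-ab\alpha(a)+ab\alpha(a)=0$ by inspection; and the degree-$2$ term vanishes because $ab\cdot\bigl(-b\alpha(a)\bigr) = -(ab^{2})\alpha(a)=0$ (using $b^{2}=0$), so that $\alpha$-compatibility upgrades $ab\cdot b_{1}=0$ to $ab\,\alpha(b_{1})=0$. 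Hence $f(x)g(x)=0$, and since $R$ is $\alpha$-almost Armendariz, the product of the coefficient $ab$ of $f$ with the coefficient $a$ of $g$ satisfies $ab\cdot a = aba \in N_{*}(R)$.

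For the remaining claims I would first observe that the hypotheses are symmetric in $a$ and $b$, so interchanging them in the same construction yields $bab \in N_{*}(R)$. Because $N_{*}(R)$ is a two-sided ideal, $(ab)^{2} = (aba)b \in N_{*}(R) \subseteq N(R)$, so $ab$ is nilpotent. Since $a^{2}=b^{2}=0$ force $(a+b)^{2}=ab+ba$ and hence $(a+b)^{3} = aba + bab$, the element $(a+b)^{3}$ lies in $N_{*}(R)\subseteq N(R)$, so $a+b \in N(R)$ as well.

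The only genuinely delicate step is pinning down $g(x)$: the coefficient $-b\alpha(a)$ is chosen precisely to cancel the degree-$1$ term, and it must simultaneously be annihilated by $ab$ in degree $2$. Verifying this top coefficient is where $\alpha$-compatibility is indispensable, as it lets me pass from the honest identity $ab\cdot b_{1}=0$ (coming from $b^{2}=0$) to $ab\,\alpha(b_{1})=0$. I would also point out that $aba$ is automatically nilpotent, since $(aba)^{2} = ab\,a^{2}\,ba = 0$, so that the real content of the statement is the stronger membership in the prime radical $N_{*}(R)$ — which is exactly what the Armendariz-type hypothesis supplies.
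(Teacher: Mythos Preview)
Your proof is correct and essentially identical to the paper's: the paper uses $f(x)=a-abx$ and $g(x)=a+b\alpha(a)x$, which differ from your choices only by harmless sign flips in the degree-$1$ coefficients, and the verification of $f(x)g(x)=0$ and the extraction of $aba\in N_{*}(R)$ proceed exactly as you describe. The only minor difference is that the paper obtains $a+b\in N(R)$ by citing an external reference, whereas you supply the self-contained computation $(a+b)^{3}=aba+bab\in N_{*}(R)$.
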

\begin{proof} Let $f(x) = a(1-bx)$ and $g(x) = a + b\alpha(a)x$. Then $f(x)g(x) = (a-abx)(a+b\alpha(a)x) = a^{2}+ab\alpha(a)x-ab\alpha(a)x-ab\alpha(b\alpha(a))x^{2} = 0$. Therefore, $aba \in N_{*}(R)$ and hence $ab \in N(R)$. By \cite{S}, $a+b \in N(R)$.
\end{proof}

\section{$\alpha$-skew almost Armendariz ring}
\begin{df} Let $\alpha$ be an endomorphism of a ring $R$. The ring $R$ is said to be an $\alpha$-skew almost Armendariz ring if whenever $f(x) = a_{0}+a_{1}x+a_{2}x^{2}+\cdots+a_{m}x^{m}$, $g(x) = b_{0}+b_{1}x+b_{2}x^{2}+\cdots+b_{n}x^{n} \in R[x; \alpha]$ such that $f(x)g(x) = 0$, then $a_{i}\alpha^{i}(b_{j}) \in N_{*}(R)$ for each $i$, $j$, where $0 \leq i \leq m$ and $0 \leq j \leq n$.
\end{df}
It is clear by definition that a subring of an $\alpha$-skew almost Armendariz ring is an $\alpha$-skew almost Armendariz ring.\\
Let $\alpha$ be an endomorphism on a ring $R$ and $M_{n}(R)$ be $n\times n$ full matrix ring over $R$. Let $\overline{\alpha} : M_{n}(R) \rightarrow M_{n}(R)$ defined by $\overline{\alpha}((a_{ij})) = (\alpha(a_{ij}))$. Then $\overline{\alpha}$ is an endomorphism on $M_{n}(R)$ ($U_{n}(R)$).
Also, we know that \\
\begin{equation*}
N_{*}(U_{n}(R)) = \left(
                    \begin{array}{ccc}
                      N_{*}(R) & R & R \\
                      0 & \ddots & R \\
                      0 & 0 & N_{*}(R) \\
                    \end{array}
                  \right).
\end{equation*}
Moreover, by Example 14 of \cite{C}, $R$ is $\alpha$-skew Armendariz ring but $U_{n}(R)$ $(n\geq 2)$ need not be $\overline{\alpha}$-skew Armendariz ring. For an $\alpha$-skew almost Armendariz ring, we have the following:
\begin{pro} Let $\alpha$ be an endomorphism of a ring $R$. Then $R$ is an $\alpha$-skew almost Armendariz ring if and only if for any positive integer $n$, $U_{n}(R)$ is an $\overline{\alpha}$-skew almost Armendariz ring.
\end{pro}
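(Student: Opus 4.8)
The plan is to mirror the argument used for Proposition 2.1, replacing the product $A_iB_j$ by the twisted product $A_i\overline{\alpha}^i(B_j)$ demanded by the skew definition. The sufficiency direction is immediate: $R$ embeds in $U_n(R)$ as the subring of scalar matrices $rI_n$, this embedding carries $\alpha$ to the restriction of $\overline{\alpha}$, and a subring of an $\overline{\alpha}$-skew almost Armendariz ring is again $\alpha$-skew almost Armendariz; hence if $U_n(R)$ has the property, so does $R$. The substance lies in the necessity direction, so I would assume $R$ is $\alpha$-skew almost Armendariz and prove $U_n(R)$ is $\overline{\alpha}$-skew almost Armendariz.

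For necessity, I would take $f(x)=\sum_{i=0}^p A_i x^i$ and $g(x)=\sum_{j=0}^q B_j x^j$ in $U_n(R)[x;\overline{\alpha}]$ with $f(x)g(x)=0$, writing $A_i=(a_{st}^i)$ and $B_j=(b_{st}^j)$. The goal is $A_i\overline{\alpha}^i(B_j)\in N_*(U_n(R))$ for all $i,j$. Using the displayed description of $N_*(U_n(R))$, namely upper triangular matrices whose diagonal entries lie in $N_*(R)$ and whose strictly upper entries are arbitrary, the membership $A_i\overline{\alpha}^i(B_j)\in N_*(U_n(R))$ is equivalent to its diagonal entries lying in $N_*(R)$. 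Since $\overline{\alpha}$ acts entrywise, $\overline{\alpha}^i(B_j)$ has $(r,r)$-entry $\alpha^i(b_{rr}^j)$, and because the diagonal of a product of upper triangular matrices is the entrywise product of their diagonals, the $(r,r)$-entry of $A_i\overline{\alpha}^i(B_j)$ is exactly $a_{rr}^i\alpha^i(b_{rr}^j)$. Thus it suffices to show $a_{rr}^i\alpha^i(b_{rr}^j)\in N_*(R)$ for every $r$ and every $i,j$.

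To obtain this, I would project the relation $f(x)g(x)=0$ onto the $r$-th diagonal. Extracting the coefficient of $x^l$ in the skew product gives $\sum_{i+j=l}A_i\overline{\alpha}^i(B_j)=0$ for each $l$, and reading off the $(r,r)$-entry yields $\sum_{i+j=l}a_{rr}^i\alpha^i(b_{rr}^j)=0$. This is precisely the vanishing of the coefficient of $x^l$ in $f_r(x)g_r(x)$, where $f_r(x)=\sum_i a_{rr}^i x^i$ and $g_r(x)=\sum_j b_{rr}^j x^j$ lie in $R[x;\alpha]$; hence $f_r(x)g_r(x)=0$. Applying the hypothesis that $R$ is $\alpha$-skew almost Armendariz to this pair gives $a_{rr}^i\alpha^i(b_{rr}^j)\in N_*(R)$ for all $i,j$, and therefore $A_i\overline{\alpha}^i(B_j)\in N_*(U_n(R))$ for all $i,j$, completing the argument.

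The only place requiring genuine care, rather than routine bookkeeping, is the coefficient extraction under the skew multiplication: one must confirm that passing to the $(r,r)$-entry commutes with forming coefficients of the $\overline{\alpha}$-twisted product, i.e. that the diagonal projection $U_n(R)[x;\overline{\alpha}]\to R[x;\alpha]$ is a ring homomorphism. This rests on the two entrywise facts noted above, that $\overline{\alpha}$ restricts to $\alpha$ in each diagonal slot and that diagonals multiply entrywise, and once these are checked the remainder is formal. I do not expect any serious obstacle beyond this verification.
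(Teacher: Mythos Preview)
Your proposal is correct and follows essentially the same approach as the paper: both directions match, with the subring argument handling one implication and the diagonal projection $U_n(R)[x;\overline{\alpha}]\to R[x;\alpha]$ reducing the other to the hypothesis on $R$ via the description of $N_*(U_n(R))$. Your write-up is actually more careful than the paper's in verifying that the diagonal projection respects the skew multiplication, but the underlying argument is identical.
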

\begin{proof} Since subring of an $\alpha$-skew almost Armendariz ring is an $\alpha$-skew almost Armendariz ring. Therefore, $R$ is an $\alpha$-skew almost Armendariz ring.\\
Conversely, let $f(x) = A_{0}+A_{1}x+A_{2}x^{2}+\cdots +A_{r}x^{r}$ and $g(x) = B_{0}+B_{1}x+B_{2}x^{2}+\cdots +B_{s}x^{s}$ $\in U_{n}(R[x;\alpha])$ such that  $f(x)g(x) = 0$, where\\

$A_{i} = \left(
           \begin{array}{cccc}
             a_{11}^{i} & a_{12}^{i} & \ldots & a_{1n}^{i} \\
             0 & a_{22}^{i} & \ldots & a_{nn}^{i} \\
             \vdots & \vdots & \ddots & \vdots \\
             0 & 0 & \ldots & a_{nn}^{i} \\
           \end{array}
         \right)$,
        and  $B_{j} = \left(
           \begin{array}{cccc}
             b_{11}^{j} & b_{12}^{j} & \ldots & b_{1n}^{j} \\
             0 & b_{22}^{j} & \ldots & b_{nn}^{i} \\
             \vdots & \vdots & \ddots & \vdots \\
             0 & 0 & \ldots & b_{nn}^{j} \\
           \end{array}
         \right)$, for each $0 \leq i \leq r$ and $0 \leq j \leq s$.\\
If $f_{t}(x) = \sum _{i=0}^{r}a_{tt}^{i}x^{i},$ $g_{t}(x) = \sum _{j=0}^{s}a_{tt}^{j}x^{j} \in R[x; \alpha]$, then $f_{t}(x)g_{t}(x) = 0$, for each $1 \leq t \leq n$. Since $R$ is the $\alpha$-skew almost Armendariz ring, therefore $a_{tt}^{i}\alpha ^{i}(b_{tt}^{j}) \in N_{*}(R)$ for each $1 \leq t \leq n$ and each $i, j$. Also, $A_{i}\overline{\alpha}^{i}(B_{j}) \in N_{*}(R)$ for each $0 \leq i \leq r$ and $0 \leq j \leq s$. Thus, $U_{n}(R)$ is an $\overline{\alpha}$-skew almost Armendariz ring.
\end{proof}
\begin{cor} If $R$ is an $\alpha$-skew Armendariz ring, then for any positive integer $n$, $U_{n}(R)$ is an $\overline{\alpha}$-skew almost Armendariz ring.
\end{cor}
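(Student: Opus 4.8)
The plan is to obtain this corollary as an immediate consequence of Proposition 3.1, once one observes that every $\alpha$-skew Armendariz ring is in particular an $\alpha$-skew almost Armendariz ring.

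First I would place the two definitions side by side. Suppose $R$ is $\alpha$-skew Armendariz, and let $f(x) = \sum_{i=0}^{m} a_i x^i$ and $g(x) = \sum_{j=0}^{n} b_j x^j$ in $R[x;\alpha]$ satisfy $f(x)g(x) = 0$. By the definition of an $\alpha$-skew Armendariz ring we obtain $a_i \alpha^i(b_j) = 0$ for all $i, j$. Since the prime radical is an ideal, we have $0 \in N_*(R)$, and therefore $a_i \alpha^i(b_j) \in N_*(R)$ for all $i, j$. This is exactly the defining condition of an $\alpha$-skew almost Armendariz ring, so $R$ is $\alpha$-skew almost Armendariz.

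Having established this containment of classes, I would invoke Proposition 3.1 with $R$ now known to be $\alpha$-skew almost Armendariz: it yields that for any positive integer $n$ the ring $U_n(R)$ is $\overline{\alpha}$-skew almost Armendariz, which is the asserted conclusion.

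There is essentially no obstacle here: the entire content is the elementary remark $0 \in N_*(R)$, which upgrades the equality $a_i \alpha^i(b_j) = 0$ to membership in $N_*(R)$, followed by a direct citation of the preceding proposition. If instead one wanted a self-contained argument avoiding Proposition 3.1, I would rerun the diagonal-projection computation used in its proof: reduce $f(x)g(x) = 0$ in $U_n(R)[x;\overline{\alpha}]$ to the diagonal identities $f_t(x)g_t(x) = 0$ in $R[x;\alpha]$ for $1 \leq t \leq n$, apply the $\alpha$-skew Armendariz property of $R$ entrywise to get the vanishing diagonal entries $a_{tt}^i \alpha^i(b_{tt}^j) = 0$ of each matrix $A_i \overline{\alpha}^i(B_j)$, and then conclude $A_i \overline{\alpha}^i(B_j) \in N_*(U_n(R))$ from the displayed description of $N_*(U_n(R))$, whose only constraint is that the diagonal entries lie in $N_*(R)$.
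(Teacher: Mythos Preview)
Your proposal is correct and matches the paper's intent: the corollary is stated immediately after Proposition~3.1 without a separate proof, precisely because it follows at once from that proposition together with the trivial observation that $\alpha$-skew Armendariz implies $\alpha$-skew almost Armendariz via $0 \in N_{*}(R)$. Your optional self-contained diagonal-projection argument is also fine and simply reproduces the proof of Proposition~3.1 specialized to this case.
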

It is noted that full matrix ring $M_{n}(R)$ over $R$ need not be an $\overline{\alpha}$-skew almost Armendariz ring.
\begin{ex}
Let $\alpha$ be an endomorphism of the ring $R$. Consider $T = M_{2}(R)$.  Let
\begin{equation*}
f(x) = \left(
          \begin{array}{cc}
            1 & 0 \\
            0 & 0 \\
          \end{array}
        \right)+\left(
                  \begin{array}{cc}
                    0 & 1 \\
                    0 & 0 \\
                  \end{array}
                \right)x
\end{equation*}
\begin{equation*}
                g(x) = \left(
                          \begin{array}{cc}
                            0 & 0 \\
                            1 & 1 \\
                          \end{array}
                        \right)+\left(
                                  \begin{array}{cc}
                                    -1 & -1 \\
                                    0 & 0 \\
                                  \end{array}
                                \right)x \in T[x; \overline{\alpha}].
\end{equation*}
Then $f(x)g(x) = 0$, but $\left(
                                              \begin{array}{cc}
                                                1 & 0 \\
                                                0 & 0 \\
                                              \end{array}
                                            \right)\alpha \Big(\left(
                                                            \begin{array}{cc}
                                                              1 & 1 \\
                                                              0 & 0 \\
                                                            \end{array}
                                                          \right)\Big) = \left(
                                                                      \begin{array}{cc}
                                                                        1 & 1 \\
                                                                        0 & 0 \\
                                                                      \end{array}
                                                                    \right)$ is not a strongly nilpotent element.\\

                                                                     Hence, $T$ is not an $\overline{\alpha}$-skew almost Armendariz ring.

\end{ex}
Recall that, for an endomorphism $\alpha$ of a ring $R$, an ideal $I$ is said to be an $\alpha$-ideal if $\alpha(I)\subseteq I$. For an $\alpha$-ideal, we define $\overline{\alpha} : R/I \rightarrow R/I$ by $\overline{\alpha}(a+I) = \alpha(a)+I$ for $a \in R$. Here, $\overline{\alpha}$ is an endomorphism of the factor ring $\frac{R}{I}$.
\begin{pro} Let $\alpha$ be an endomorphism of a ring $R$ and $I$ be an $\alpha$-ideal. If $R/I$ is an $\overline{\alpha}$-skew almost Armendariz ring with $I\subseteq N_{*}(R)$, then $R$ is an $\alpha$-skew almost Armendariz ring.
\end{pro}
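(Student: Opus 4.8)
The plan is to push the relation $f(x)g(x)=0$ down to the quotient $R/I$, invoke the $\overline{\alpha}$-skew almost Armendariz property there, and then lift the resulting membership back to $R$ using the relationship between the prime radicals of $R$ and of $R/I$. First I would set up the reduction map. Since $I$ is an $\alpha$-ideal, $\overline{\alpha}(a+I)=\alpha(a)+I$ is a well-defined endomorphism of $R/I$, and the natural projection $\pi:R\to R/I$ extends to a ring homomorphism $\overline{\pi}:R[x;\alpha]\to (R/I)[x;\overline{\alpha}]$ sending $\sum a_{i}x^{i}\mapsto \sum\overline{a_{i}}\,x^{i}$; the twisting relation $xr=\alpha(r)x$ is respected precisely because passing to $R/I$ commutes with the $\alpha$-action on account of $\alpha(I)\subseteq I$.

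Next, take $f(x)=\sum_{i=0}^{m}a_{i}x^{i}$ and $g(x)=\sum_{j=0}^{n}b_{j}x^{j}$ in $R[x;\alpha]$ with $f(x)g(x)=0$. Applying $\overline{\pi}$ gives $\overline{f}(x)\,\overline{g}(x)=0$ in $(R/I)[x;\overline{\alpha}]$, where $\overline{f}(x)=\sum\overline{a_{i}}\,x^{i}$ and $\overline{g}(x)=\sum\overline{b_{j}}\,x^{j}$. Since $R/I$ is $\overline{\alpha}$-skew almost Armendariz, this yields $\overline{a_{i}}\,\overline{\alpha}^{\,i}(\overline{b_{j}})\in N_{*}(R/I)$ for all $i,j$. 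Because $\overline{a_{i}}\,\overline{\alpha}^{\,i}(\overline{b_{j}})=\overline{a_{i}\alpha^{i}(b_{j})}=\pi\bigl(a_{i}\alpha^{i}(b_{j})\bigr)$, I conclude $\pi\bigl(a_{i}\alpha^{i}(b_{j})\bigr)\in N_{*}(R/I)$ for every pair $i,j$.

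The decisive step is the identity $N_{*}(R/I)=N_{*}(R)/I$, which holds because $I\subseteq N_{*}(R)$. I would prove it via the prime-ideal correspondence: every prime ideal $P$ of $R$ contains $N_{*}(R)$ and hence contains $I$, so $P\mapsto P/I$ is a bijection between all prime ideals of $R$ and all prime ideals of $R/I$. Intersecting over all such $P$ gives $N_{*}(R/I)=\bigcap_{P}(P/I)=\bigl(\bigcap_{P}P\bigr)/I=N_{*}(R)/I$. Consequently $\pi^{-1}\bigl(N_{*}(R/I)\bigr)=N_{*}(R)$, so the membership $\pi\bigl(a_{i}\alpha^{i}(b_{j})\bigr)\in N_{*}(R/I)$ forces $a_{i}\alpha^{i}(b_{j})\in N_{*}(R)$ for all $i,j$. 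This is exactly the $\alpha$-skew almost Armendariz condition for $R$, completing the argument.

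I expect the main obstacle to be justifying $N_{*}(R/I)=N_{*}(R)/I$ cleanly; everything else is a routine transport of the defining identity along the quotient homomorphism. One should also verify that $\overline{\pi}$ is genuinely a homomorphism of skew polynomial rings—the only subtle point being compatibility of the twist with passage to $R/I$, which is guaranteed by $I$ being an $\alpha$-ideal. Note that no reducedness or cancellation is implicitly required, since the whole argument only transports membership in a radical rather than equalities of products.
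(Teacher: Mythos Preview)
Your proof is correct and follows essentially the same route as the paper: reduce modulo $I$, apply the $\overline{\alpha}$-skew almost Armendariz hypothesis on $R/I$, and then use $N_{*}(R/I)=N_{*}(R)/I$ (which holds because $I\subseteq N_{*}(R)$) to lift the conclusion back to $R$. The paper's proof is terser, simply asserting the key identity $N_{*}(R/I)=N_{*}(R)/I$ without justification, whereas you supply the prime-ideal correspondence argument and verify that $\overline{\pi}$ respects the twist; these additions are sound but do not change the underlying strategy.
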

\begin{proof} Let $f(x) = \sum_{i = 0}^{m}a_{i}x^{i},$ $g(x) = \sum_{j = 0}^{n}b_{j}x^{j} \in R[x; \alpha]$ such that $f(x)g(x) = 0$. Then $(\sum_{i = 0}^{m}\overline{a_{i}}x^{i})(\sum_{j = 0}^{n}\overline{b_{j}}x^{j}) = 0$. Therefore, $\overline{a_{i}}\alpha ^{i}(\overline{b_{j}}) \in N_{*}(R/I) = N_{*}(R)/I$ for each $i, j$. This implies $a_{i}\alpha^{i}(b_{j}) \in N_{*}(R)$ for each $i, j$. Thus, $R$ is an $\alpha$-skew almost Armendariz ring.
\end{proof}
\begin{pro} Let $\alpha$ be an endomorphism on an abelian ring $R$ such that $\alpha(e) = e$, for each idempotent element $e \in R$. Then $R$ is an $\alpha$-skew almost Armendariz ring if and only if $eR$ and $(1-e)R$ are $\alpha$-skew almost Armendariz rings.
\end{pro}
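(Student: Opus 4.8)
The plan is to follow the template of Proposition~2.6, exploiting that an abelian ring splits along a central idempotent. Since $R$ is abelian, the idempotent $e$ is central, so $R = eR \oplus (1-e)R$ is a ring direct sum in which $eR$ and $(1-e)R$ are unital rings with identities $e$ and $1-e$. Because $\alpha(e) = e$ we also have $\alpha(1-e) = 1-e$ and $\alpha^{i}(e) = e$ for all $i$, so $\alpha$ restricts to endomorphisms $\alpha_{1} = \alpha|_{eR}$ and $\alpha_{2} = \alpha|_{(1-e)R}$. The key arithmetic fact I would record first is that the prime radical respects this finite direct decomposition, namely $N_{*}(R) = eN_{*}(R) \oplus (1-e)N_{*}(R)$ with $N_{*}(eR) = eN_{*}(R) = N_{*}(R) \cap eR$ and symmetrically for $1-e$; this is what lets me transfer nilpotency information between $R$ and its two factors.

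For the direction $(1) \Rightarrow (2)$, I would take $f(x) = \sum a_{i}x^{i}$ and $g(x) = \sum b_{j}x^{j}$ in $(eR)[x;\alpha_{1}]$ with $f(x)g(x) = 0$, regard them as elements of $R[x;\alpha]$ (the skew product agrees, since $\alpha_{1}$ is the restriction of $\alpha$), and apply the hypothesis on $R$ to obtain $a_{i}\alpha^{i}(b_{j}) \in N_{*}(R)$. As $a_{i}, \alpha^{i}(b_{j}) \in eR$, each such product lies in $N_{*}(R) \cap eR = N_{*}(eR)$, so $eR$ is $\alpha_{1}$-skew almost Armendariz; the argument for $(1-e)R$ is identical. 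This is exactly the subring remark following Definition~3.1, made precise by the radical decomposition.

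The substance is $(2) \Rightarrow (1)$. Given $f(x) = \sum_{i=0}^{m} a_{i}x^{i}$ and $g(x) = \sum_{j=0}^{n} b_{j}x^{j}$ in $R[x;\alpha]$ with $f(x)g(x) = 0$, I would form $ef(x) = \sum (ea_{i})x^{i}$ and $eg(x) = \sum(eb_{j})x^{j}$ in $(eR)[x;\alpha_{1}]$. The crucial computation is that $(ef)(eg) = 0$: comparing coefficients of $x^{l}$, I use $\alpha^{i}(eb_{j}) = \alpha^{i}(e)\alpha^{i}(b_{j}) = e\,\alpha^{i}(b_{j})$ (here $\alpha(e)=e$ is indispensable) together with the centrality of $e$ to get $\sum_{i+j=l}(ea_{i})\alpha^{i}(eb_{j}) = e\sum_{i+j=l}a_{i}\alpha^{i}(b_{j}) = 0$. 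Since $eR$ is $\alpha_{1}$-skew almost Armendariz, $ea_{i}\alpha^{i}(b_{j}) \in N_{*}(eR) \subseteq N_{*}(R)$, and symmetrically $(1-e)a_{i}\alpha^{i}(b_{j}) \in N_{*}(R)$; adding the two and using that $N_{*}(R)$ is an ideal yields $a_{i}\alpha^{i}(b_{j}) \in N_{*}(R)$ for all $i, j$, which is the claim.

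The main obstacle I anticipate is not the bookkeeping but verifying the compatibility of multiplication by $e$ with the skew product, that is, the coefficientwise identity $(ef)(eg) = e\,(fg)$. This is precisely where both hypotheses enter: centrality of $e$ gives $(ea_{i})(e\alpha^{i}(b_{j})) = e^{2}a_{i}\alpha^{i}(b_{j}) = ea_{i}\alpha^{i}(b_{j})$, while $\alpha(e) = e$ guarantees that $\alpha^{i}$ does not displace the idempotent; without $\alpha(e)=e$ the factor $eR$ need not even be $\alpha$-invariant and the whole splitting collapses. I would also take care to state and justify the radical-decomposition fact at the outset, since identifying $N_{*}(eR)$ with $eN_{*}(R)$ is what the argument ultimately rests upon.
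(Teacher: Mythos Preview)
Your proposal is correct and follows essentially the same route as the paper: use the subring observation for $(1)\Rightarrow(2)$, and for $(2)\Rightarrow(1)$ split $f,g$ via the central idempotent $e$, check $(ef)(eg)=0$ and $((1-e)f)((1-e)g)=0$, apply the hypotheses on $eR$ and $(1-e)R$, and combine using $N_{*}(eR)=eN_{*}(R)$, $N_{*}((1-e)R)=(1-e)N_{*}(R)$. Your write-up is in fact more careful than the paper's in two respects---you justify $(ef)(eg)=e(fg)$ explicitly via $\alpha^{i}(e)=e$ and centrality, and you make the identification $N_{*}(R)\cap eR=N_{*}(eR)$ explicit in the forward direction rather than appealing to the bare subring remark---but the underlying argument is the same.
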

\begin{proof} Let $R$ is an $\alpha$-skew almost  Armendariz ring. Since $eR$ and $(1-e)R$ are subrings of $R$, therefore $eR$ and $(1-e)R$ are $\alpha$-skew almost Armendariz rings. \\ Conversely, let $f(x) = \sum_{i = 0}^{m}a_{i}x^{i},$ $g(x) = \sum_{j = 0}^{n}b_{j}x^{j} \in R[x; \alpha]$ such that $f(x)g(x) = 0$. Let $f_{1}(x) = ef(x)$, $f_{2}(x) = (1-e)f(x)$, $g_{1}(x) = eg(x)$ and $g_{2}(x) = (1-e)g(x)$. Then $f_{1}(x)g_{1}(x) = 0$ and $f_{2}(x)g_{2}(x) = 0$ in $R[x; \alpha]$. Since $eR$ and $(1-e)R$ is an $\alpha$-skew almost Armendariz rings and also $N_{*}(eR) = eN_{*}(R)$, $N_{*}((1-e)R) = (1-e)N_{*}(R)$, therefore $ea_{i}\alpha ^{i}(b_{j}) \in N_{*}(R)$ and $(1-e)a_{i}\alpha ^{i}(b_{j}) \in N_{*}(R)$ for each $i, j$. Therefore, $a_{i}\alpha ^{i}(b_{j}) \in N_{*}(R)$ for each $i, j$. Hence, $R$ is an $\alpha$-skew almost Armendariz ring.
\end{proof}

\begin{lem} Let $\alpha$ be an endomorphism on a reversible ring $R$ such that $a\alpha(b) = 0$, whenever $ab = 0$ for any $a, b \in R$. If $ab \in N_{*}(R)$, then $a\alpha ^{t}(b) \in N_{*}(R)$ for any positive integers $t$.
\end{lem}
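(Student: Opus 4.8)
The plan is to prove directly that the ideal $R\,a\alpha^{t}(b)\,R$ is nilpotent, using the characterization $N_{*}(R)=\{x\in R: RxR \text{ is nilpotent}\}$ recalled in the introduction. Note first that the hypothesis ``$ab=0\Rightarrow a\alpha(b)=0$'' is only one half of $\alpha$-compatibility, so Lemma $(2.2)$ does not apply verbatim here; the role of the missing (converse) implication will be taken over by reversibility.

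\textbf{Step 1 (two elementary shift rules).} I would first record, for all $u,v\in R$ and all $t\geq 1$, the rules (i) $uv=0\Rightarrow u\alpha^{t}(v)=0$ and (ii) $uv=0\Rightarrow \alpha^{t}(u)v=0$. Rule (i) follows by iterating the hypothesis: $uv=0$ gives $u\alpha(v)=0$, and applying the hypothesis again to the pair $(u,\alpha(v))$ gives $u\alpha^{2}(v)=0$, and so on. Rule (ii) then follows from (i) together with reversibility, since $uv=0\Rightarrow vu=0\Rightarrow v\alpha^{t}(u)=0\Rightarrow \alpha^{t}(u)v=0$.

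\textbf{Step 2 (the single-replacement lemma).} The key observation is that inside any vanishing product one may replace a distinguished factor $b$ by $\alpha^{t}(b)$. Precisely, for any $L,M\in R$, if $LbM=0$ then $L\alpha^{t}(b)M=0$. Indeed, reversibility turns $L(bM)=0$ into $b(ML)=0$; rule (ii) upgrades this to $\alpha^{t}(b)(ML)=0$; and reversibility turns $(\alpha^{t}(b)M)L=0$ back into $L\alpha^{t}(b)M=0$. The point is that reversibility lets me rotate the distinguished $b$ to an end of the product, where the shift rule applies cleanly, and then rotate back; this is exactly what substitutes for the second half of $\alpha$-compatibility used in the proof of Lemma $(2.2)$.

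\textbf{Step 3 (passing to the ideal).} Since $ab\in N_{*}(R)$, the ideal $RabR$ is nilpotent, say $(RabR)^{N}=0$. A typical generator of $(R\,a\alpha^{t}(b)\,R)^{N}$ has the form $c_{1}\,a\alpha^{t}(b)\,c_{2}\,a\alpha^{t}(b)\cdots c_{N}\,a\alpha^{t}(b)\,c_{N+1}$ with $c_{i}\in R$. Starting from the corresponding product $c_{1}\,ab\,c_{2}\,ab\cdots c_{N}\,ab\,c_{N+1}$, which lies in $(RabR)^{N}$ and hence equals $0$, I would replace the copies of $b$ one at a time, rightmost first: at each stage I designate the chosen $b$ as the middle factor, let $L$ and $M$ be the (already partly modified) products to its left and right, and invoke Step 2 to conclude that the product is still $0$ after the replacement. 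After $N$ replacements the generator with all factors $a\alpha^{t}(b)$ vanishes, so $(R\,a\alpha^{t}(b)\,R)^{N}=0$, and therefore $a\alpha^{t}(b)\in N_{*}(R)$.

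I expect the main obstacle to be Step 2, namely handling the one-sided hypothesis in the interior of a long product. With full $\alpha$-compatibility one could freely shift $\alpha$ on and off the arbitrary spacer elements $c_{i}$, as is done in Lemma $(2.2)$; but here only the forward implication is available, so I must never need to strip an $\alpha$ off a spacer. The rotation device of Step 2 achieves precisely this, with reversibility making each rotation legitimate, and the only remaining care is to check that the replacements compose correctly across all $N$ copies in Step 3.
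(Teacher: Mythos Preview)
Your proof is correct and follows essentially the same route as the paper: both arguments use reversibility to cyclically rotate a vanishing product so that a designated factor $b$ sits at an end, apply the one-sided hypothesis to upgrade it to $\alpha^{t}(b)$, rotate back, and iterate across all copies of $b$. Your packaging is slightly tidier---you isolate the single-replacement device as a standalone Step~2 and work with general products $c_{1}\,ab\,c_{2}\,ab\cdots c_{N}\,ab\,c_{N+1}$ rather than powers $(r_{1}abr_{2})^{m}$, which makes the passage to nilpotence of the full ideal $R\,a\alpha^{t}(b)\,R$ more direct---but the underlying mechanism is identical to the paper's.
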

\begin{proof} Let $ab \in N_{*}(R)$. Then $RabR$ is nilpotent. Therefore, there exist a positive integer $m$ such that for any $r_{1}, r_{2} \in R$, $(r_{1}abr_{2})^{m} = 0$. This implies $(r_{1}abr_{2})^{m-1}(r_{1}abr_{2}) = 0$ and  $((r_{1}abr_{2})^{m-1}(r_{1}ab))r_{2} = 0$. Since $R$ is reversible, $(r_{2}(r_{1}abr_{2})^{m-1})(r_{1}ab) = 0$, and by assumption, there exist a positive integer $t$ such that $(r_{2}(r_{1}abr_{2})^{m-1}r_{1}a)\alpha ^{t}(b) = 0$. Also by repeated application of reversibility of $R$, $(r_{1}abr_{2})^{m-1}(r_{1}a\alpha^{t}(b)r_{2})= 0$ and $(r_{1}a\alpha ^{t}(b)r_{2})(r_{1}abr_{2})^{m-1} \\= 0$. \\
Again, $(r_{1}a\alpha ^{t}(b)r_{2})(r_{1}abr_{2})^{m-2}(r_{1}abr_{2}) = 0$ implies, $r_{2}{(r_{1}a\alpha ^{t}(b)r_{2})(r_{1}abr_{2})^{m-2}r_{1}ab} = 0$ and hence $r_{2}{(r_{1}a\alpha ^{t}(b)r_{2})(r_{1}abr_{2})^{m-2}}r_{1}a\alpha ^{t}(b) = 0$. This implies ${(r_{1}a\alpha ^{t}(b)r_{2})(r_{1}abr_{2})^{m-2}}(r_{1}a\alpha ^{t}(b)r_{2})\\ = 0$. Hence, ${(r_{1}a\alpha ^{t}(b)r_{2})}^{2}(r_{1}abr_{2})^{m-2} = 0$. Continuing this process, we get $(r_{1}a\alpha ^{t}(b)r_{2})^{m} = 0$. Therefore, $Ra\alpha ^{t}(b)R$ is nilpotent for any positive integer $t$. Thus, $a\alpha^{t}(b) \in N_{*}(R)$.
\end{proof}
\begin{pro} Let $R$ be a reversible ring and $\alpha$ be an endomorphism of $R$ such that $a\alpha(b) = 0$, whenever $ab = 0$ for any $a, b \in R$. Then $R$ is an $\alpha$-skew almost Armendariz ring.
\end{pro}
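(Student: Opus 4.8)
The plan is to mimic the inductive scheme of Proposition 2.4, but to route every manipulation of the skew coefficients through Lemma 3.1 and through reversibility, since here we only have the one-sided compatibility $ab=0\Rightarrow a\alpha(b)=0$ rather than full $\alpha$-compatibility. Writing $f(x)=\sum_{i=0}^{m}a_i x^i$ and $g(x)=\sum_{j=0}^{n}b_j x^j$, the hypothesis $f(x)g(x)=0$ in $R[x;\alpha]$ is equivalent to the system $\sum_{i+j=l}a_i\alpha^i(b_j)=0$ for $l=0,1,\dots,m+n$, and the goal is exactly $a_i\alpha^i(b_j)\in N_*(R)$ for all $i,j$. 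Before starting, I would record the tools: (a) Lemma 3.1, which upgrades a relation $uv\in N_*(R)$ to $u\alpha^t(v)\in N_*(R)$ for every $t\ge 1$, that is, a purely \emph{forward} shift of the $\alpha$-power; and (b) the fact that a reversible ring is semicommutative and hence $2$-primal, so that $N_*(R)=N(R)$. From (b) one obtains two further facts that replace Lemmas 2.2--2.3 of the compatible setting: the reversal $uv\in N_*(R)\Leftrightarrow vu\in N_*(R)$ (since $uv$ and $vu$ are simultaneously nilpotent) and the square-root rule $u^2\in N_*(R)\Rightarrow u\in N_*(R)$.

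The proof proper is an induction on $l=i+j$. The base case $l=0$ is $a_0 b_0=0\in N_*(R)$. Assuming $a_i\alpha^i(b_j)\in N_*(R)$ whenever $i+j<l$, I would establish the degree-$l$ terms $a_s\alpha^s(b_{l-s})\in N_*(R)$ by peeling them off one at a time, for $s=0,1,\dots,l$. At stage $s$ the terms $a_i\alpha^i(b_{l-i})$ with $i<s$ are already known to lie in $N_*(R)$, so the relation $\sum_{i+j=l}a_i\alpha^i(b_j)=0$ reduces to $\sum_{i\ge s}a_i\alpha^i(b_{l-i})\in N_*(R)$. Right-multiplying this by $a_s$, I would kill every term with $i>s$ as follows: the induction hypothesis gives $a_s\alpha^s(b_{l-i})\in N_*(R)$ (legitimate since $s+(l-i)<l$ when $i>s$), Lemma 3.1 shifts this forward to $a_s\alpha^i(b_{l-i})\in N_*(R)$ (the shift $i-s\ge 1$ being positive), reversibility turns it into $\alpha^i(b_{l-i})a_s\in N_*(R)$, and left multiplication by $a_i$ keeps it in the ideal $N_*(R)$. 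What survives is $a_s\alpha^s(b_{l-s})\,a_s\in N_*(R)$; squaring, $\big(a_s\alpha^s(b_{l-s})\big)^2=\big(a_s\alpha^s(b_{l-s})a_s\big)\alpha^s(b_{l-s})\in N_*(R)$, and the square-root rule then yields $a_s\alpha^s(b_{l-s})\in N_*(R)$. Running $s$ from $0$ up to $l$ completes the inductive step, and hence the proposition.

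The point I expect to be most delicate, and the reason for choosing right multiplication by $a_s$ and peeling from the low-index end, is that the one-sided compatibility lets me move $\alpha$-powers only forward, never backward. In the $\alpha$-compatible proof of Proposition 2.4 one freely invokes the converse shift (Lemma 2.2(2)) to replace $u\alpha^l(v)\in N_*(R)$ by $uv\in N_*(R)$; that move is unavailable here, so any scheme that leaves an $\alpha$-power stranded on a coefficient, as a naive left multiplication by $b_0$ would (producing $b_0 a_l\alpha^l(b_0)\in N_*(R)$), gets stuck. The arrangement above is designed so that the only adjustments required are forward shifts $\alpha^s\to\alpha^i$ with $i>s$, exactly what Lemma 3.1 supplies; the remaining care is bookkeeping, namely verifying that the degree constraint $s+(l-i)<l$ for $i>s$ always makes the induction hypothesis applicable, and that reversibility, via $2$-primality, legitimizes the reversal and square-root steps.
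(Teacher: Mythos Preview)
Your proof is correct and follows essentially the same route as the paper's: induction on $l=i+j$, peeling off one term at a time by multiplying the degree-$l$ relation by $a_s$, invoking Lemma~3.1 for the forward $\alpha$-shift, and using reversibility (hence $2$-primality, so $N_*(R)=N(R)$) for the reversal and square-root steps. The only tactical difference is that the paper left-multiplies by $a_s$ while you right-multiply; your version is a touch more economical (one fewer reversal per term) and your exposition makes the role of $2$-primality explicit, whereas the paper leaves those steps implicit.
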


\begin{proof} Let  $f(x) = a_{0}+a_{1}x+a_{2}x^{2}+\cdots+a_{m}x^{m}$, $g(x) = b_{0}+b_{1}x+b_{2}x^{2}+\cdots+b_{n}x^{n} \in R[x; \alpha]$ such that $f(x)g(x) = 0$. Then we have the following equations:

$\begin{array}{ll}
a_{0}b_{0} = 0  ~~~~~~  \hfill (1)\\
a_{0}b_{1}+a_{1}\alpha(b_{0}) = 0  ~~~~~~  \hfill (2)\\
a_{0}b_{2}+a_{1}\alpha(b_{1})+a_{2}\alpha^{2}(b_{0})= 0  ~~~~~~  \hfill (3)\\
a_{0}b_{l}+a_{1}\alpha(b_{l-1})+a_{2}\alpha^{2}(b_{l-2})+a_{l}\alpha^{l}b_{0}= 0 ~~~~~  \hfill (4)\\
\ldots ~~~\ldots~~~ \ldots  \\

a_{m}\alpha^{m}(b_{n}) = 0  ~~~~~  \hfill (5)\\
\end{array}$\\


To prove $a_{i}\alpha^{i}(b_{j}) \in N_{*}(R)$, we use principle of induction on $i+j$.\\
If $i+j = 0$, then $a_{0}b_{0} = 0 \in N_{*}(R)$.\\
Let result is true for $i+j<l$ where $l\leq m+n$, i.e. $a_{i}\alpha^{i}(b_{j}) \in N_{*}(R)$, for $i+j<l$. Now, we prove $a_{i}\alpha^{i}(b_{j}) \in N_{*}(R)$ for $i+j = l$.
\end{proof}
Multiplying equation (4), by $a_{0}$ from left, we have
\begin{equation*}
a_{0}a_{0}b_{l}+a_{0}a_{1}\alpha(b_{l-1})+a_{0}a_{2}\alpha^{2}(b_{l-2})+a_{0}a_{l}\alpha^{l}(b_{0}) = 0.
\end{equation*}
By Lemma (3.1), $a_{i}\alpha ^{l}(b_{0}) \in N_{*}(R)$ for $i<l$ and $a_{i}r\alpha ^{l}(b_{0}) \in N_{*}(R)$ for any $r \in R$, since $R$ is the reversible ring. Therefore, by above equation, we have,
\begin{equation*}
a_{0}a_{0}b_{l} = -(a_{0}a_{1}\alpha(b_{l-1})+a_{0}a_{2}\alpha^{2}(b_{l-2})+a_{0}a_{l}\alpha^{l}(b_{0})) \in N_{*}(R).
\end{equation*}
This implies $a_{0}a_{0}b_{l} \in N_{*}(R)$, again $R$ is reversible so $a_{0}b_{l}a_{0}b_{l} \in N_{*}(R)$, hence $a_{0}b_{l} \in N_{*}(R)$. Also, multiplying equation (4) by $a_{1}$ from left, we get,
 $a_{1}\alpha(b_{l-1}) \in N_{*}(R)$. Continuing this process, we obtain $a_{i}\alpha^{i}(b_{j}) \in N_{*}(R)$, for $i+j = l$.
 Thus, by induction $a_{i}\alpha ^{i}(b_{j})\in N_{*}(R)$ for each $i, j$. Hence, $R$ is an $\alpha$-skew almost Armendariz ring.

\begin{lem}(Lemma 7, \cite{T}) Let $R$ be an $\alpha(*)$-ring with $\alpha$-ideal $N_{*}(R)$. If $ab \in N_{*}(R)$, then $a\alpha^{n}(b) \in N_{*}(R)$ and $\alpha^{n}(a)b \in N_{*}(R)$ for any positive integer $n$. Conversely, if $a\alpha^{k}(b)$ or $\alpha^{k}(a)b \in N_{*}(R)$ for some positive integer $k$, then $ab \in N_{*}(R)$.
\end{lem}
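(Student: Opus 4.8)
The plan is to pass to the quotient ring $\overline{R} = R/N_{*}(R)$ and show that there the two hypotheses collapse into an ordinary rigidity statement, after which the machinery already developed for compatible rings finishes the job. First I would invoke the standing assumption that $N_{*}(R)$ is an $\alpha$-ideal: this is exactly what guarantees that $\alpha$ descends to a well-defined endomorphism $\overline{\alpha}$ of $\overline{R}$ by $\overline{\alpha}(a + N_{*}(R)) = \alpha(a) + N_{*}(R)$. I would also record the standard fact that the prime radical is idempotent as a radical, so $N_{*}(\overline{R}) = 0$; consequently the canonical projection $\pi \colon R \to \overline{R}$ has kernel exactly $N_{*}(R)$, meaning that for $x \in R$ one has $x \in N_{*}(R)$ if and only if $\pi(x) = 0$, and moreover $\overline{\alpha}^{n}(\overline{b}) = \overline{\alpha^{n}(b)}$ for every $n$.

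The crucial step is to observe that $\overline{\alpha}$ is a rigid endomorphism of $\overline{R}$. Indeed, if $\overline{a}\,\overline{\alpha}(\overline{a}) = 0$ in $\overline{R}$, then $\overline{a\alpha(a)} = 0$, i.e. $a\alpha(a) \in N_{*}(R)$; the $\alpha(*)$ hypothesis then forces $a \in N_{*}(R)$, that is $\overline{a} = 0$. Hence $\overline{R}$ is $\overline{\alpha}$-rigid, and by the Hashemi--Moussavi characterization \cite{EE} an $\overline{\alpha}$-rigid ring is reduced and $\overline{\alpha}$-compatible. This is the heart of the argument: it upgrades the single ``$\alpha(*)$'' condition on $R$ to the full strength of $\overline{\alpha}$-compatibility on $\overline{R}$, which is precisely the setting of Lemma~$(2.1)$.

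With compatibility of $\overline{R}$ in hand, both directions become translations of Lemma~$(2.1)$. For the forward direction, if $ab \in N_{*}(R)$ then $\overline{a}\,\overline{b} = 0$, so Lemma~$(2.1)(1)$ applied in $\overline{R}$ yields $\overline{a}\,\overline{\alpha}^{n}(\overline{b}) = 0$ and $\overline{\alpha}^{n}(\overline{a})\,\overline{b} = 0$; rewriting these as $\overline{a\alpha^{n}(b)} = 0$ and $\overline{\alpha^{n}(a)b} = 0$ and using $\ker \pi = N_{*}(R)$ gives $a\alpha^{n}(b) \in N_{*}(R)$ and $\alpha^{n}(a)b \in N_{*}(R)$. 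For the converse, if $\alpha^{k}(a)b \in N_{*}(R)$ then $\overline{\alpha}^{k}(\overline{a})\,\overline{b} = 0$, and Lemma~$(2.1)(2)$ gives $\overline{a}\,\overline{b} = 0$, i.e. $ab \in N_{*}(R)$; the case $a\alpha^{k}(b) \in N_{*}(R)$ is handled the same way, peeling off one $\overline{\alpha}$ at a time via $\overline{\alpha}$-compatibility until $\overline{a}\,\overline{b} = 0$.

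The main obstacle I expect is not any single computation but making the quotient formalism airtight: verifying that $\overline{\alpha}$ is genuinely well defined (which is exactly where the ``$\alpha$-ideal $N_{*}(R)$'' hypothesis is consumed) and that $N_{*}(\overline{R}) = 0$, so that equalities proved in $\overline{R}$ lift back to membership in $N_{*}(R)$ with no loss of information. Once these bookkeeping facts are secured, the rigidity observation together with Lemma~$(2.1)$ does all the real work.
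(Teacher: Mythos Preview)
Your proof is correct. Note, however, that the paper itself does not supply a proof of this lemma: it is quoted verbatim as Lemma~7 of \cite{T} and simply used as a black box, so there is no argument in the present paper to compare yours against.

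That said, your quotient-ring approach is clean and entirely sound. The key insight---that the $\alpha(*)$ condition on $R$ together with the $\alpha$-invariance of $N_{*}(R)$ translates precisely into $\overline{\alpha}$-rigidity of $\overline{R}=R/N_{*}(R)$, after which the Hashemi--Moussavi equivalence hands you $\overline{\alpha}$-compatibility and reducedness---is exactly right, and it reduces the lemma to Lemma~2.1 applied in $\overline{R}$. The bookkeeping you flag (well-definedness of $\overline{\alpha}$, the fact that $N_{*}(R/N_{*}(R))=0$ so that vanishing in $\overline{R}$ lifts to membership in $N_{*}(R)$) is routine and you have identified the hypotheses that discharge it. Your handling of the case $a\alpha^{k}(b)\in N_{*}(R)$ by iterated use of the defining biconditional of compatibility (rather than Lemma~2.1(2), which is stated only for $\alpha^{n}(a)b$) is also correct.
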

\begin{thm}(Theorem 8, \cite{T}) Let $R$ be an $\alpha(*)$ ring with an $\alpha$-ideal $N_{*}(R)$. Assume that $p(x) = \sum_{i=0}^{m}a_{i}x^{i}$ and $q(x) = \sum_{j=0}^{n}b_{j}x^{j} \in R[x; \alpha]$. Then the following statements are equivalent:
\begin{itemize}
\item[$(1)$] $p(x)q(x) \in N_{*}(R)[x; \alpha]$.
\item[$(2)$] $a_{i}b_{j} \in N_{*}(R)$ for each $i, j$, where $0\leq i \leq m$ and $0\leq j \leq n$.
\end{itemize}
\end{thm}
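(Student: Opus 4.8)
The plan is to prove the two implications separately, treating $(2)\Rightarrow(1)$ as the routine direction and handling the substantive direction $(1)\Rightarrow(2)$ by passing to the quotient ring $R/N_{*}(R)$.

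For $(2)\Rightarrow(1)$ I would simply inspect the coefficients of the product. The coefficient of $x^{l}$ in $p(x)q(x)$ is $\sum_{i+j=l}a_{i}\alpha^{i}(b_{j})$. Assuming $a_{i}b_{j}\in N_{*}(R)$ for all $i,j$, the forward part of Lemma 3.2 (applied with exponent $n=i$; the case $i=0$ being trivial) gives $a_{i}\alpha^{i}(b_{j})\in N_{*}(R)$. Since $N_{*}(R)$ is an additive subgroup of $R$, each coefficient $\sum_{i+j=l}a_{i}\alpha^{i}(b_{j})$ then lies in $N_{*}(R)$, so that $p(x)q(x)\in N_{*}(R)[x;\alpha]$.

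For $(1)\Rightarrow(2)$ the idea is to reduce modulo the prime radical. Because $N_{*}(R)$ is assumed to be an $\alpha$-ideal, $\alpha$ descends to an endomorphism $\overline{\alpha}$ of $\overline{R}:=R/N_{*}(R)$, and the coefficientwise reduction $R[x;\alpha]\to\overline{R}[x;\overline{\alpha}]$ is a ring homomorphism whose kernel is exactly $N_{*}(R)[x;\alpha]$; hence $R[x;\alpha]/N_{*}(R)[x;\alpha]\cong\overline{R}[x;\overline{\alpha}]$. The crucial observation is that the $\alpha(*)$ hypothesis forces $\overline{\alpha}$ to be a \emph{rigid} endomorphism of $\overline{R}$: if $\overline{a}\,\overline{\alpha}(\overline{a})=0$ in $\overline{R}$, then $a\alpha(a)\in N_{*}(R)$, so $a\in N_{*}(R)$, i.e. $\overline{a}=0$. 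Consequently $\overline{R}$ is $\overline{\alpha}$-rigid, and therefore reduced and $\overline{\alpha}$-skew Armendariz by the results of Hong et al.\ \cite{C}. Now hypothesis $(1)$ says precisely that $p(x)q(x)$ reduces to $0$, i.e. $\overline{p}(x)\,\overline{q}(x)=0$ in $\overline{R}[x;\overline{\alpha}]$. Applying the $\overline{\alpha}$-skew Armendariz property yields $\overline{a_{i}}\,\overline{\alpha}^{\,i}(\overline{b_{j}})=0$ for all $i,j$, that is, $a_{i}\alpha^{i}(b_{j})\in N_{*}(R)$. Finally the converse part of Lemma 3.2 (with $k=i$, the case $i=0$ being immediate) upgrades this to $a_{i}b_{j}\in N_{*}(R)$, which is $(2)$.

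The step I expect to be the main obstacle is the middle one: justifying that reduction modulo $N_{*}(R)$ is legitimate and produces a rigid endomorphism. This rests entirely on the two standing hypotheses -- $N_{*}(R)$ being an $\alpha$-ideal, which makes $\overline{\alpha}$ well defined and identifies the skew-polynomial quotient with $\overline{R}[x;\overline{\alpha}]$, and the $\alpha(*)$ condition, which gives rigidity of $\overline{\alpha}$. Once rigidity is secured, the $\overline{\alpha}$-skew Armendariz property of rigid rings carries out all the combinatorial work that would otherwise demand a delicate induction on $i+j$ in the style of Proposition 2.5, while the two directions of Lemma 3.2 only serve to translate between the ``$a_{i}b_{j}$'' and ``$a_{i}\alpha^{i}(b_{j})$'' forms on either side of the quotient.
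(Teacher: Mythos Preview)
Your argument is sound. The direction $(2)\Rightarrow(1)$ is indeed routine via Lemma~3.2, and for $(1)\Rightarrow(2)$ your reduction modulo $N_{*}(R)$ works: the $\alpha$-ideal hypothesis makes $\overline{\alpha}$ well defined on $\overline{R}=R/N_{*}(R)$, the $\alpha(*)$ hypothesis translates exactly to rigidity of $\overline{\alpha}$, and then the known fact that $\overline{\alpha}$-rigid rings are $\overline{\alpha}$-skew Armendariz (Hong--Kim--Kwak \cite{C}) together with the converse half of Lemma~3.2 closes the loop.

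Regarding comparison: the present paper does not supply its own proof of this statement --- it is quoted verbatim as Theorem~8 of Kwak \cite{T} and used as a black box (e.g.\ to justify Remark~3.1). So there is no in-paper argument to compare with. For what it is worth, the original proof in \cite{T} also passes to $R/N_{*}(R)$ and invokes rigidity of the induced endomorphism, so your approach is essentially the intended one rather than a genuinely different route. One minor stylistic point: you might state explicitly that $\overline{\alpha}$-rigidity forces $\overline{R}$ to be reduced (via $a^{2}=0\Rightarrow a\alpha(a)\,\alpha(a\alpha(a))=0\Rightarrow a\alpha(a)=0\Rightarrow a=0$), since some readers expect ``rigid ring'' to entail that, but Hong et al.'s result that rigid $\Rightarrow$ skew Armendariz does not need reducedness as a separate input.
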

\begin{rem} If $R$ is an $\alpha(*)$ ring with an $\alpha$-ideal $N_{*}(R)$. Then $R$ is an $\alpha$-skew almost Armendariz ring.
\end{rem}


\begin{thm} Let $R$ be a reversible ring and $\alpha$ be an endomorphism of $R$ such that $a\alpha(b) = 0$, whenever $ab = 0$ for any $a, b \in R$. If for some positive integer $k$, $\alpha^{k} = I$, then $R[x]$ is an $\overline{\alpha}$-skew almost Armendariz ring.
\end{thm}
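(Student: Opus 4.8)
The plan is to reduce the statement to the single-variable case already settled in Proposition (3.4) and then propagate it to $R[x]$ by collapsing the extra indeterminate into one skew variable. By Proposition (3.4) the hypotheses on $R$ and $\alpha$ already guarantee that $R$ is an $\alpha$-skew almost Armendariz ring; the reversibility and the condition $ab=0\Rightarrow a\alpha(b)=0$ enter the argument only through that proposition. Thus I would take $F(y)=\sum_{i=0}^{m}f_{i}(x)y^{i}$ and $G(y)=\sum_{j=0}^{n}g_{j}(x)y^{j}$ in $R[x][y;\overline{\alpha}]$ with $F(y)G(y)=0$, where $f_{i}(x)=\sum_{c}a_{ic}x^{c}$ and $g_{j}(x)=\sum_{d}b_{jd}x^{d}$, and I must show $f_{i}(x)\overline{\alpha}^{i}(g_{j}(x))\in N_{*}(R[x])$ for all $i,j$.

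The heart of the argument is to build a ring homomorphism $\varphi:R[x][y;\overline{\alpha}]\to R[t;\alpha]$ into a single skew-polynomial ring. First I would pick a multiple $A$ of $k$ with $A>\max(m,n)$ and set $\varphi(r)=r$ for $r\in R$, $\varphi(x)=t^{A}$, and $\varphi(y)=t$. The role of $\alpha^{k}=I$ is precisely to make this well defined: since $A$ is a multiple of $k$ we have $t^{A}r=\alpha^{A}(r)t^{A}=rt^{A}$, so the central indeterminate $x$ of $R[x]$ is sent to an element commuting with $R$, while $\varphi(y)=t$ reproduces the skew relation $tr=\alpha(r)t$. Checking that $\varphi$ respects the defining relation $y\,s=\overline{\alpha}(s)\,y$ (for $s\in R[x]$) on the generators $r$ and $x$ is then routine, so $\varphi$ is a genuine ring homomorphism and $\varphi(F)\varphi(G)=0$. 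Because $A>\max(m,n)$, the exponent assignments $(i,c)\mapsto Ac+i$ and $(j,d)\mapsto Ad+j$ are injective, so the coefficient of $t^{Ac+i}$ in $\varphi(F)$ is exactly $a_{ic}$ and the coefficient of $t^{Ad+j}$ in $\varphi(G)$ is exactly $b_{jd}$; no two coefficients are merged.

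Now I would apply Proposition (3.4) to $\varphi(F)\varphi(G)=0$ in $R[t;\alpha]$, obtaining $a_{ic}\,\alpha^{Ac+i}(b_{jd})\in N_{*}(R)$ for all $i,c,j,d$. Since $Ac$ is a multiple of $k$, we have $\alpha^{Ac+i}=\alpha^{i}$, hence $a_{ic}\,\alpha^{i}(b_{jd})\in N_{*}(R)$. Finally I would reassemble: expanding $f_{i}(x)\overline{\alpha}^{i}(g_{j}(x))=\sum_{c,d}a_{ic}\,\alpha^{i}(b_{jd})\,x^{c+d}$, each coefficient of $x^{e}$ is the finite sum $\sum_{c+d=e}a_{ic}\alpha^{i}(b_{jd})$, which lies in $N_{*}(R)$ because $N_{*}(R)$ is an ideal. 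Therefore $f_{i}(x)\overline{\alpha}^{i}(g_{j}(x))\in N_{*}(R)[x]=N_{*}(R[x])$ for all $i,j$, as required; here I rely on the identity $N_{*}(R[x])=N_{*}(R)[x]$ already used in the proof of Proposition (2.5). The main obstacle to watch is the well-definedness of $\varphi$: one must send the central indeterminate $x$ to a power $t^{A}$ with $k\mid A$ rather than to $t$ itself, since otherwise the central relation $xr=rx$ clashes with the skew relation $tr=\alpha(r)t$ in the target. Controlling coefficient collisions through $A>\max(m,n)$ and reducing $\alpha^{Ac+i}$ modulo $k$ are the only remaining bookkeeping points.
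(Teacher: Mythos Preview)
Your argument is correct and follows the same overall plan as the paper: reduce the two-variable problem in $R[x][y;\overline{\alpha}]$ to a one-variable problem in a single skew polynomial ring, invoke Proposition~(3.4) to obtain $a_{ic}\alpha^{i}(b_{jd})\in N_{*}(R)$, and finish via $N_{*}(R)[x]=N_{*}(R[x])$.

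The genuine difference is the substitution. The paper keeps $x$ as the skew variable and sends $y\mapsto x^{kv+1}$, using $\alpha^{kv+1}=\alpha$ to make the image of $y$ satisfy the correct twist; you instead send the \emph{central} variable $x$ to a central power $t^{A}$ (with $k\mid A$) and $y$ to $t$. Your choice buys you an honest ring homomorphism $\varphi:R[x][y;\overline{\alpha}]\to R[t;\alpha]$, so $\varphi(F)\varphi(G)=0$ is automatic. In the paper's version one must additionally justify why $p(x^{kv+1})q(x^{kv+1})=0$ holds in $R[x;\alpha]$, since once $f_{i}(x),g_{j}(x)$ are read in $R[x;\alpha]$ the variable $x$ is no longer central and the products $f_{i}\cdot\overline{\alpha}^{i}(g_{j})$ there need not coincide with those in $R[x]$; your route sidesteps this delicate point entirely. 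A minor cosmetic remark: your injectivity bound $A>\max(m,n)$ is exactly what is needed, since $0\le i\le m$ and $0\le j\le n$ force $(i,c)$ and $(j,d)$ to be recoverable from $Ac+i$ and $Ad+j$ by division with remainder.
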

\begin{proof}
Let $p(y) = f_{0}(x) + f_{1}(x)y + \cdots + f_{m}(x)y^{m}$, $q(y) = g_{0} + g_{1}(x)y +\cdots +g_{n}y^{n}\in R[x][y; \overline{\alpha}]$ such that $p(y)q(y) = 0$, where $f_{i}(x), g_{j}(x) \in R[x]$. Here, $f_{i}(x) = a_{i0} + a_{i1}x + \cdots + a_{is_{i}}x^{s_{i}}$, $g_{j}(x) = b_{j0} + b_{j1}x + \cdots + b_{jt_{j}}x^{t_{j}}$, for each $0 \leq i \leq m$ and $0 \leq j \leq n$, where $a_{i0}, a_{i1}, \ldots, a_{is_{i}}, b_{j0}, b_{j1}, \ldots, b_{jt_{j}} \in R$. We have to prove $f_{i}(x)\overline{\alpha}^{i}(g_{j}(x)) \in N_{*}(R[x])$, for each $0 \leq i \leq m$ and $0 \leq j \leq n$.\\ Choose a positive integer $v$ such that $v > deg(f_{0}(x)) + deg(f_{1}(x)) + \cdots + deg(f_{m}(x)) + deg(g_{0}(x)) + deg (g_{1}(x)) + \cdots + deg(g_{n}(x))$.
Now,
\begin{eqnarray*}
p(x^{kv+1})&=&f(x) = f_{0}(x) + f_{1}(x)x^{kv+1} + f_{2}(x)x^{2kv+2} + \cdots + f_{m}(x)x^{mkv+m};\\
q(x^{kv+1})&=&g(x) = g_{0}(x) + g_{1}(x)x^{kv+1} + g_{2}(x)x^{2kv+2} + \cdots + g_{n}(x)x^{nkv+n}.
\end{eqnarray*}
Then $p(x^{kv+1}), q(x^{kv+1})\in R[x]$ and sets of coefficients of $p(x^{kv+1})$ and $q(x^{kv+1})$ are equal to the sets of coefficients of $f_{i}'s$ and $g_{j}'s$ respectively. Since $p(y)q(y) = 0 \in R[x][y; \alpha]$ and $x$ commutes with elements of $R$ and $\alpha^{k} = I_{R}$, we have $p(x^{kv+1})q(x^{kv+1}) = 0 \in R[x; \alpha]$. Since, $R$ is an $\alpha$-skew almost Armendariz ring, therefore $a_{ic}\alpha ^{i}(b_{jd}) \in N_{*}(R)$, for all $0 \leq i \leq m$, $0 \leq j \leq n$, $c \in \{0, 1, \ldots, s_{i}\}$ and $d \in \{0, 1, \ldots, t_{j}\}$. Hence, $f_{i}(x)\overline{\alpha}^{i}(g_{j}(x)) \in N_{*}(R)[x] = N_{*}(R[x])$, for all $0 \leq i \leq m$ and $0 \leq j \leq n$. Thus, $R[x]$ is an $\overline{\alpha}$-skew almost Armendariz ring.
\end{proof}
\begin{thm} Let $R$ be a reversible ring and $\alpha$ be an endomorphism of $R$ such that $a\alpha(b) = 0$, whenever $ab = 0$ for any $a, b \in R$. If for some positive integer $k$, $\alpha^{k} = I$, then $R[x; \alpha]$ is an almost Armendariz ring.
\end{thm}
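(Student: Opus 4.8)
The plan is to reduce the almost Armendariz property of $S=R[x;\alpha]$ to a single identity in the skew polynomial ring $R[x;\alpha]$, where Proposition 3.4 already applies. Take $f(y)=\sum_{i=0}^{m}p_i(x)\,y^i$ and $g(y)=\sum_{j=0}^{n}q_j(x)\,y^j$ in $S[y]$ (with $y$ a central indeterminate over $S$) such that $f(y)g(y)=0$, where $p_i(x)=\sum_c a_{ic}x^c$ and $q_j(x)=\sum_d b_{jd}x^d$ lie in $R[x;\alpha]$; I must show $p_i(x)q_j(x)\in N_*(R[x;\alpha])$ for all $i,j$. Mimicking the device of Theorem 3.2, I would choose a positive integer $v$ with $kv$ larger than every $\deg p_i$ and every $\deg q_j$ and consider the evaluation $y\mapsto x^{kv}$. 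Because $\alpha^{kv}=(\alpha^k)^v=I$, the element $x^{kv}$ is central in $R[x;\alpha]$ (here is where the central variable $y$ forces the exponent $kv$ rather than the $kv+1$ used for the skew variable in Theorem 3.2), so this evaluation is a ring homomorphism $S[y]\to R[x;\alpha]$ and carries $f(y)g(y)=0$ to $f(x^{kv})g(x^{kv})=0$. Since $kv$ exceeds all the degrees, the monomials $a_{ic}x^{kvi+c}$ occupy disjoint ranges of $x$-degrees, so $f(x^{kv})$ and $g(x^{kv})$ are genuine elements of $R[x;\alpha]$ whose coefficient at $x^{kvi+c}$ (resp. $x^{kvj+d}$) is exactly $a_{ic}$ (resp. $b_{jd}$), with no collisions.

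Next I would apply Proposition 3.4: since $R$ is reversible and $a\alpha(b)=0$ whenever $ab=0$, $R$ is an $\alpha$-skew almost Armendariz ring, so from $f(x^{kv})g(x^{kv})=0$ I obtain $a_{ic}\,\alpha^{kvi+c}(b_{jd})\in N_*(R)$ for all admissible $i,c,j,d$. Using $\alpha^{kvi}=I$ this simplifies to $a_{ic}\,\alpha^{c}(b_{jd})\in N_*(R)$. The coefficient of $x^e$ in the skew product $p_i(x)q_j(x)=\sum_{c,d}a_{ic}\alpha^{c}(b_{jd})x^{c+d}$ is the finite sum $\sum_{c+d=e}a_{ic}\alpha^{c}(b_{jd})$, a sum of elements of the ideal $N_*(R)$ and hence itself in $N_*(R)$. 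Therefore every coefficient of $p_i(x)q_j(x)$ lies in $N_*(R)$, i.e. $p_i(x)q_j(x)\in N_*(R)[x;\alpha]$.

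The remaining, and main, point is to upgrade this to $p_i(x)q_j(x)\in N_*(R[x;\alpha])$, i.e. to establish $N_*(R)[x;\alpha]\subseteq N_*(R[x;\alpha])$ (the reverse inclusion being Proposition 2.3 and not needed here). For this I would use the description $N_*(R)=\{r\in R: RrR \text{ is nilpotent}\}$ recorded in the Introduction together with $\alpha^k=I$. Given $h(x)=\sum_c c_c x^c\in N_*(R)[x;\alpha]$, each principal ideal $Rc_cR$ is nilpotent; since $\alpha$ is an automorphism of finite order $k$, the finitely many elements $\alpha^s(c_c)$ $(0\le s<k)$ again lie in $N_*(R)$, so the $\alpha$-invariant ideal $I'=\sum_{s,c}R\,\alpha^s(c_c)\,R$ is a finite sum of nilpotent ideals and hence nilpotent, say $I'^{M}=0$. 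Then $h(x)\in I'[x;\alpha]$, which is an ideal of $R[x;\alpha]$ with $(I'[x;\alpha])^{M}\subseteq (I'^{M})[x;\alpha]=0$, so $\big(R[x;\alpha]\,h(x)\,R[x;\alpha]\big)^{M}=0$ and $h(x)\in N_*(R[x;\alpha])$. Applying this to $h=p_iq_j$ yields $p_i(x)q_j(x)\in N_*(R[x;\alpha])$ for all $i,j$, so $R[x;\alpha]$ is almost Armendariz. The step that must be handled with real care is exactly this last inclusion: the almost Armendariz property for $R[x;\alpha]$ is phrased in terms of $N_*(R[x;\alpha])$, whereas the substitution only delivers membership in $N_*(R)[x;\alpha]$, and it is the finiteness of the order of $\alpha$ that makes the auxiliary ideal $I'$ nilpotent and thus closes the gap.
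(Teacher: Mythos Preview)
Your argument is correct and follows exactly the paper's strategy: substitute a high central power $x^{kv}$ for the commuting indeterminate $y$ and then invoke Proposition~3.4 in $R[x;\alpha]$. You are in fact more careful than the paper, which passes directly from $a_{ic}\alpha^{c}(b_{jd})\in N_*(R)$ to $p_iq_j\in N_*(R[x;\alpha])$ without arguing the inclusion $N_*(R)[x;\alpha]\subseteq N_*(R[x;\alpha])$; your finite-sum-of-nilpotent-ideals argument (which genuinely uses $\alpha^k=I$ to get an $\alpha$-invariant nilpotent ideal) closes that gap cleanly.
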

\begin{proof} Let $p(y) = f_{0}(x) + f_{1}(x)y + \cdots + f_{m}(x)y^{m}$, $q(y) = g_{0} + g_{1}(x)y +\cdots +g_{n}y^{n}\in R[x; \alpha][y]$ such that $p(y)q(y) = 0$, where $f_{i}(x), g_{j}(x) \in R[x; \alpha]$. Write $f_{i}(x) = a_{i0} + a_{i1}x + \cdots + a_{is_{i}}x^{s_{i}}$, $g_{j}(x) = b_{j0} + b_{j1}x + \cdots + b_{jt_{j}}x^{t_{j}}$, for each $0 \leq i \leq m$ and $0 \leq j \leq n$, where $a_{i0}, a_{i1}, \ldots, a_{is_{i}}, b_{j0}, b_{j1}, \ldots, b_{jt_{j}} \in R$. To prove $f_{i}(x)(g_{j}(x)) \in N_{*}(R[x; \alpha])$, for each $0 \leq i \leq m$ and $0 \leq j \leq n$. Choose a positive integer $w$ such that $w > deg(f_{0}(x)) + deg(f_{1}(x)) + \cdots + deg(f_{m}(x)) + deg(g_{0}(x)) + deg (g_{1}(x)) + \cdots + deg(g_{n}(x))$.
Now,
\begin{eqnarray*}
p(x^{kw}) = f_{0}(x) + f_{1}(x)x^{kw} + f_{2}(x)x^{2kw} + \cdots + f_{m}(x)x^{mkw};\\
q(x^{kw}) = g_{0}(x) + g_{1}(x)x^{kw} + g_{2}(x)x^{2kw} + \cdots + g_{n}(x)x^{nkw}.
\end{eqnarray*}
 Then $p(x^{kw}), q(x^{kw}) \in R[x; \alpha]$. Also coefficients of $p(x^{kw})$ and $q(x^{kw})$ are ultimately the coefficients of $f_{i}'s$ and $g_{j}'s$ respectively. Here, $p(x^{kw})q(x^{kw}) = 0 \in R[x; \alpha]$ and $\alpha^{k} = I$. Since $R$ is $\alpha$-skew almost Armendariz ring by Proposition (3.4), therefore, $a_{ic}\alpha ^{i}(b_{jd}) \in N_{*}(R),$ for all $0 \leq i \leq m$, $0 \leq j \leq n$, $c \in \{0, 1, \ldots, s_{i}\}$ and $d \in \{0, 1, \ldots, t_{j}\}$. Hence, $f_{i}g_{j} \in N_{*}(R[x; \alpha])$ for each $0 \leq i \leq m$ and $0 \leq j \leq n$. Thus, $R[x; \alpha]$ is an almost Armendariz ring.
\end{proof}
\begin{thm} Let $\alpha$ be an endomorphism of $R$ and $\alpha^{t} = I$ for some positive integer $t$. Then $R$ is an $\alpha$-skew almost Armendariz ring if and only if $R[x]$ is an $\alpha$-skew almost Armendariz ring.
\end{thm}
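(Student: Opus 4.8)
The plan is to prove both implications, reading ``$R[x]$ is $\alpha$-skew almost Armendariz'' as ``$R[x]$ is $\overline{\alpha}$-skew almost Armendariz'', where $\overline{\alpha}(\sum_k c_k x^k)=\sum_k \alpha(c_k)x^k$, and to use throughout the identity $N_{*}(R[x])=N_{*}(R)[x]$ already invoked in Proposition (2.6) and Theorem (3.3). For the ``if'' direction I would argue by the subring principle recorded after Definition (3.1). Since $R$ embeds in $R[x]$ as the constant polynomials and $\overline{\alpha}$ restricts to $\alpha$ on $R$, the skew polynomial ring $R[y;\alpha]$ sits inside $(R[x])[y;\overline{\alpha}]$ as a subring; hence a relation $f(y)g(y)=0$ with $f,g\in R[y;\alpha]$ also holds in $(R[x])[y;\overline{\alpha}]$, and the hypothesis on $R[x]$ yields $a_i\overline{\alpha}^i(b_j)=a_i\alpha^i(b_j)\in N_{*}(R[x])$. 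Because each $a_i\alpha^i(b_j)$ lies in $R$ and $R\cap N_{*}(R)[x]=N_{*}(R)$, we recover $a_i\alpha^i(b_j)\in N_{*}(R)$, so $R$ is $\alpha$-skew almost Armendariz.

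For the ``only if'' direction I would follow the substitution technique of Theorems (3.3) and (3.5), now with $\alpha^{t}=I$ replacing the reversibility hypotheses. Take $p(y)=\sum_{i=0}^{m}f_i(x)y^i$ and $q(y)=\sum_{j=0}^{n}g_j(x)y^j$ in $(R[x])[y;\overline{\alpha}]$ with $p(y)q(y)=0$, write $f_i(x)=\sum_c a_{ic}x^c$ and $g_j(x)=\sum_d b_{jd}x^d$, and fix $v$ larger than the sum of all degrees $\deg f_i$ and $\deg g_j$. Substituting $y=x^{tv+1}$ produces $p(x^{tv+1}),\,q(x^{tv+1})\in R[x;\alpha]$ whose coefficient sets are exactly $\{a_{ic}\}$ and $\{b_{jd}\}$; the bound on $v$ is what keeps the blocks indexed by distinct values of $i$ (resp. $j$) from overlapping, so no two coefficients are merged. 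Granting $p(x^{tv+1})q(x^{tv+1})=0$ in $R[x;\alpha]$, the $\alpha$-skew almost Armendariz property of $R$ produces products $a_{ic}\alpha^{(\cdot)}(b_{jd})\in N_{*}(R)$, and $\alpha^{t}=I$ collapses the exponents (since $\alpha^{i(tv+1)}=\alpha^{i}$) to $a_{ic}\alpha^{i}(b_{jd})\in N_{*}(R)$. Summing over $c,d$ gives $f_i(x)\overline{\alpha}^i(g_j(x))\in N_{*}(R)[x]=N_{*}(R[x])$, exactly the $\overline{\alpha}$-skew almost Armendariz conclusion for $R[x]$.

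I expect the delicate step to be the verification that $y=x^{tv+1}$ genuinely sends $p(y)q(y)=0$ to $p(x^{tv+1})q(x^{tv+1})=0$ in $R[x;\alpha]$, together with the attendant bookkeeping of $\alpha$-powers. The subtlety is that in $(R[x])[y;\overline{\alpha}]$ the variable $x$ commutes with the coefficients, whereas in $R[x;\alpha]$ pushing a power $x^{c}$ past a coefficient applies $\alpha^{c}$; one must check that $\alpha^{t}=I_{R}$, combined with the separation-of-degrees choice of $v$, realigns these $\alpha$-powers with the exponents $i$ that are read off in the definition of the $\alpha$-skew almost Armendariz property, so that the terms regroup coefficient-by-coefficient in accordance with the relations coming from $p(y)q(y)=0$. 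Once this alignment is confirmed, the remainder is the routine packaging already carried out in Theorem (3.3), and the two implications together give the stated equivalence.
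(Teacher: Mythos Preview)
Your plan is essentially the paper's own proof. For the ``if'' direction the paper says only ``Since $R$ is a subring of $R[x]$, therefore converse is also true,'' which is exactly your subring argument. For the ``only if'' direction the paper uses the same degree-separating substitution trick and the same identity $N_{*}(R[x])=N_{*}(R)[x]$; the one visible difference is the exponent chosen for the substitution: the paper sets $y=x^{kt}$ (a multiple of $t$, so $x^{kt}$ is central in $R[x;\alpha]$), whereas you set $y=x^{tv+1}$ (congruent to $1$ modulo $t$, so that $x^{tv+1}$ acts on $R$ via $\alpha$, matching the action of $y$). Both choices are in the same spirit and lead to the same argument.

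One caveat on the bookkeeping you flag as delicate: applying the $\alpha$-skew almost Armendariz property to $p(x^{tv+1})q(x^{tv+1})=0$ yields $a_{ic}\,\alpha^{\,c+i(tv+1)}(b_{jd})=a_{ic}\,\alpha^{\,c+i}(b_{jd})\in N_{*}(R)$, not $a_{ic}\,\alpha^{i}(b_{jd})$ as you write; the extra $\alpha^{c}$ is precisely the residual twist coming from the inner variable $x$ that you acknowledge in your last paragraph but do not eliminate. The paper's version has the analogous loose end (its substitution $N=kt$ gives $\alpha^{c}$ rather than $\alpha^{i}$), and it, too, passes over the verification that $p(x^{N})q(x^{N})=0$ in $R[x;\alpha]$ with a one-line remark. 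So your proposal tracks the paper faithfully, including at the step both of you leave only sketched.
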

\begin{proof} Let $R$ be an ${\alpha}$-skew almost Armendariz ring. Let $f(y) = p_{0}(x)+p_{1}(x)y+p_{2}(x)y^{2}+\cdots+p_{m}(x)y^{m}$, $g(y) = q_{0}(x)+q_{1}(x)y+q_{2}(x)y^{2}+\cdots+q_{n}(x)y^{n}$ in $R[x][y; \alpha]$ such that $f(y)g(y) = 0$. We also take, $p_{i}(x) = a_{i0} + a_{i1}x + \cdots + a_{iu_{i}}x^{u_{i}}$, $q_{j}(x) = b_{j0} + b_{j1}x + \cdots + b_{jv_{j}}x^{v_{j}}$, for each $0 \leq i \leq m$ and $0 \leq j \leq n$, where $a_{i0}, a_{i1}, \ldots, a_{iu_{i}}, b_{j0}, b_{j1}, \ldots, b_{jv_{j}} \in R$.\\ Choose a positive integer $k$ such that $k > deg(p_{0}(x)) + deg(p_{1}(x)) + \cdots + deg(p_{m}(x)) + deg(q_{0}(x)) + deg (q_{1}(x)) + \cdots + deg(q_{n}(x))$. Now, $f(x^{kt}) = p_{0}(x)+p_{1}(x)x^{kt}+p_{2}(x)x^{2kt}+\cdots+p_{m}(x)x^{mkt}$, $g(x^{kt}) = q_{0}(x)+q_{1}(x)x^{kt}+q_{2}(x)x^{2kt}+\cdots+q_{n}(x)x^{nkt} \in R[x]$. Then the sets of coefficients of $p_{i}^{'s}$ and $q_{i}^{'s}$ are equal to the sets of coefficients of $f(x^{kt})$ and $g(x^{kt})$ respectively. Since $f(y)g(y) = 0$ and $x$ commute with element of $R$ in the polynomial $R[x]$, $\alpha ^{kt} = I$, therefore $f(x^{kt})g(x^{kt}) = 0 \in R[x; \alpha]$. Since $R$ is an $\alpha$-skew almost Armendariz ring, therefore $a_{ic}\alpha ^{i}(b_{jd}) \in N_{*}(R),$ for all $0 \leq i \leq m$, $0 \leq j \leq n$, $c \in \{0, 1, \ldots, u_{i}\}$ and $d \in \{0, 1, \ldots, v_{j}\}$. Hence $p_{i}(x)\alpha ^{i}(q_{j}(x)) \in N_{*}(R)[x] = N_{*}(R[x])$ for each $0 \leq i \leq m$, $0 \leq j \leq n$. Thus, $R[x]$ is an $\alpha$-skew almost Armendariz ring. \\
Since $R$ is a subring of $R[x]$, therfore Converse is also true.
\end{proof}



\end{document}